\newtheorem{theorem}{Theorem}[section]
\newtheorem{lemma}[theorem]{Lemma}
\newtheorem{corollary}[theorem]{Corollary}
\newtheorem{remark}[theorem]{Remark}
\theoremstyle{definition}
\begin{document}

\title[]{Regularity of linear and polynomial images of Skorohod differentiable measures}

\author{Egor D. Kosov}

\begin{abstract}
In this paper we study the regularity properties of
linear and polynomial images of Skorohod differentiable measures.
Firstly, we obtain estimates for the Skorohod derivative norm
of a projection of a product of Scorohod differentiable measures.
In the second part of the paper we prove Nikolskii--Besov regularity
of a polynomial image of a Skorohod differentiable measure on $\mathbb{R}^n$.
\end{abstract}

\maketitle

\noindent
Keywords:
Skorohod derivative, bounded variation,
Nikolskii--Besov space, Besov space,
distribution of a polynomial

\noindent
AMS Subject Classification: 60E05, 60E15, 60F99

\section{Introduction}

Let $\mu$ be a Skorohod differentiable probability measure on $\mathbb{R}^n$.
In this paper we study the regularity properties of induced measures
$\mu\circ f^{-1}$, where $f$ is a linear or a polynomial function.

The first part of this paper deals with measures $\mu$ of a special type:
$\mu=\mu_1\otimes\ldots\otimes\mu_n$,
where each $\mu_j$ is a probability measure on $\mathbb{R}$.
Such measures are
distributions of random vectors $\xi:=(\xi_1,\ldots,\xi_n)$
with independent components $\xi_1,\ldots,\xi_n$.
It was  proved in \cite{RV} (see also \cite{BCh})
 that there is an absolute constant $C$ such that, whenever
 $P$ is the orthogonal projection  onto some $k$-dimensional subspace of $\mathbb{R}^n$,
the density of the random vector
$P\xi$ is bounded by~$C^k$, provided that the density of each $\xi_j$ is not greater than $1$.
In particular, in the case $k=1$ one has $C=\sqrt2$ as a corollary of two known results
of Rogozin \cite{Rog} and Ball \cite{B-Cube1}.
In \cite{LPP}, Ball's arguments from \cite{B-Cube1} and \cite{B-Cube2}
were adapted to provide some sharp bounds in the
Rudelson--Vershynin theorem from \cite{RV}.
In this paper we consider a similar question concerning
bounds on the Skorohod derivative norm
of the distribution of the random vector $P\xi$,
provided that the distribution of each $\xi_j$ is
Skorohod differentiable. In particular, we provide an analog
of the Rudelson--Vershynin result for the norm of the
Skorohod derivative (all necessary definitions will be recalled in Section~$2$):

{\it
For each $k\in \mathbb{N}$ there is $C(k)$ such that
for any probability measures $\mu_1,\ldots,\mu_n$ on $\mathbb{R}$
with $\max\limits_{1\le j\le n}\|\mu_j'\|_{\rm TV}\le 1$ and for any
orthogonal projector $P\colon\mathbb{R}^n\to\mathbb{R}^k$
one has
$$
\sup\limits_{|e|=1}\|D_e[\mu\circ P^{-1}]\|_{\rm TV}\le C(k),
$$
where $\mu=\mu_1\otimes\ldots\otimes\mu_n$. Moreover, $C(1)=\sqrt2$.
}

The proof is based on the recent result of Bobkov, Chistyakov and G\"otze
\cite[Lemma 4.3]{BChG}
about representation of any Skorohod differentiable probability measure $\nu$
on $\mathbb{R}$ as a convex mixture of uniform distributions
$$
\nu = \int \nu_{[a,b]}\, \pi(dadb)
$$
such that
$$
\|\nu'\|_{\rm TV}=\int \|\nu_{[a,b]}'\|_{\rm TV}\, \pi(dadb).
$$
This representation enables one to reduce the proof to the case
where $\xi=(\xi_1,\ldots,\xi_n)$ is uniformly distributed
in the cube $Q_n=[-1/2, 1/2]^n$ (i.e., $\mu(dx)=I_{Q_n}(x)dx$)
and than apply some known results from the theory of logarithmically
concave measures, since any linear image of the uniform distribution
on a convex set is a logarithmically concave measure.

The second main result of this paper concerns polynomial images of
general Skorohod differentiable probability measures $\mu$ and asserts that,
for any polynomial $f$ of degree $d$ on $\mathbb{R}^n$,
the distribution density of $\mu\circ f^{-1}$ belongs to the Nikolskii--Besov class
$B^{1/d}_{1,\infty}(\mathbb{R})$.
The Nikolskii--Besov fractional regularity of
polynomial images of measures
was studied in \cite{Kos} and \cite{BKZ}
in the case of Gaussian and general logarithmically concave measures.
In particular, it was proved in~\cite{Kos} that for any log-concave measure and any polynomial $f$
of degree $d$ on $\mathbb{R}^n$ one has
$$
[\mathbb{D}f]^{1/d}
\int_\mathbb{R}|\rho_f(t+h) - \rho_f(t)|\, dt\le C(d)|h|^{1/d} \quad \forall h\in \mathbb{R},
$$
where $\rho_f$ is the density of the measure
$\mu\circ f^{-1}$ and  $\mathbb{D}f$ is the variance of $f$.
In this paper we obtain a similar bound in the case where $\mu$
is a general Skorohod differentiable probability measure:
$$
\|B_d\|^{1/d}\int_\mathbb{R}|\rho_f(t+h) - \rho_f(t)|\, dt
\le 24\pi\sup_{|\theta|=1}\|D_\theta\mu\|_{\rm TV}|h|^{1/d},
$$
where
$f(x):= \sum\limits_{j=0}^d B_j(x,\ldots,x)$,
each $B_j\colon (\mathbb{R}^n)^j\to \mathbb{R}$ is a symmetric $j$-linear
function on $(\mathbb{R}^n)^j$ and
$$
\|B_d\|:=\sup_{|x_1|=1,\ldots, |x_d|=1}|B_d(x_1, \ldots, x_d)|.
$$
The proof is based on some ideas of the papers \cite{Kos} and \cite{BKZ}
and relies on several lemmas from these papers.

\section{Preliminaries}

We first recall some definitions and notation which will be used throughout this paper.

Let $\langle\cdot,\cdot\rangle$ denote the standard inner product on $\mathbb{R}^n$
and let $|\cdot|$ be the usual norm generated by this inner product.

The total variation norm of a (signed) measure $\sigma$ on $\mathbb{R}^n$
is defined by the equality
$$
\|\sigma\|_{\rm TV} :=
\sup\biggl\{\int \varphi\, d\sigma, \ \varphi\in C_0^\infty(\mathbb{R}^n),\ \|\varphi\|_\infty\le1\biggr\},
$$
where $\|\varphi\|_\infty:=\sup|\varphi(x)|$
and  $C_0^\infty(\mathbb{R}^n)$ is the space
of all infinitely differentiable functions with compact support.

Let $\mu$ be a Borel probability  measure on $\mathbb{R}^n$.
The measure $\mu$ is called Skorohod differentiable
along $h\in\mathbb{R}^n$ if
there is a bounded signed measure $D_h\mu$
such that
$$
\int_{\mathbb{R}^n} \partial_h \varphi\, d\mu=
-\int_{\mathbb{R}^n} \varphi\, d(D_h\mu)
$$
for every $\varphi\in C_0^\infty(\mathbb{R}^n)$ (see \cite{DiffMeas}).
A measure $\mu$ has the Skorohod derivatives along all vectors~$h$ precisely when
it possesses a density $\rho_\mu$ of class $BV$ (the class of functions of bounded variation).
The latter means that every
generalized derivative $\partial_h\rho_\mu$ is a bounded measure.
In the one-dimensional case in place of $D_1\mu$ we write $\mu'$.
For a $\mu$-measurable mapping $f\colon\mathbb{R}^n\to\mathbb{R}^k$,
the image measure $\mu\circ f^{-1}$
is defined by the equality
$$
\mu\circ f^{-1}(A):=\mu(f\in A)$$
for all Borel sets $A\subset\mathbb{R}^k$.

A Borel probability  measure $\mu$ on $\mathbb{R}^n$ is
logarithmically concave (log-concave or convex) if
$$
    \mu(t A + (1-t)B) \ge \mu(A)^{t}\mu(B)^{1-t} \quad \forall\, t\in [0,1]
$$
for every pair of Borel sets $A,B$ (see \cite{Bor}).
Equivalently, $\mu$
has a density of the form $e^{-V}$ with respect to Lebesgue measure
on some affine subspace~$L$,
where $V\colon\, L\to (-\infty, +\infty]$ is a convex function.
Two main examples of log-concave measures are
uniform distributions on convex sets and Gaussian measures.
A log-concave measure $\mu$ on $\mathbb{R}^n$ is called isotropic
if it is absolutely continuous with respect to Lebesgue measure and
$$
\int_{\mathbb{R}^n}\langle x,\theta\rangle\, \mu(dx)=0,\quad
\int_{\mathbb{R}^n}\langle x,\theta\rangle^2\, \mu(dx)=
|\theta|^2\quad \forall\ \theta\in\mathbb{R}^n.
$$
It is known (see \cite{Krug} and also \cite[Section~4.3]{DiffMeas})
that
for any log-concave measure $\mu$ with a density $\rho$
and any unit vector $h$ one has
$$
    \|D_h\mu\|_{\rm TV}=2\int_{\langle h\rangle^\bot} \max\limits_t\rho(x+th)\, dx,
$$
where $\langle h\rangle^\bot$ is the orthogonal complement of $h$.

Recall that a function $f$ on $\mathbb{R}^n$ is a polynomial of degree $d$ if it is of the form
$$
f(x):= \sum_{j=0}^d B_j(x,\ldots,x),
$$
where each $B_j\colon (\mathbb{R}^n)^j\to \mathbb{R}$ is a symmetric $j$-linear
function on $(\mathbb{R}^n)^j$ and $B_d$ is not identically zero.
For a multilinear function $B\colon (\mathbb{R}^n)^d\to\mathbb{R}$ set
$$
\|B\|:=\sup_{|x_1|=1,\ldots, |x_d|=1}|B(x_1, \ldots, x_d)|,
$$
where $x_1, \ldots, x_d\in\mathbb{R}^n$.

For a function $g$ on $\mathbb{R}$ we set
$$
\|g\|_p := \Bigl(\int_{\mathbb{R}} |g(x)|^p\, dx\Bigr)^{1/p},\quad \|g\|_\infty:= \sup_{x\in\mathbb{R}}|g(x)|.
$$
Recall (see \cite{BIN}, \cite{Nikol77}, \cite{Trieb}, \cite{Stein}) that the Nikolskii--Besov class
$B^\alpha_{1,\infty}(\mathbb{R})$ with $\alpha\in(0,1)$ consists of all functions $g\in L^1(\mathbb{R})$
such that
$$
\int_\mathbb{R}|g(x+h)-g(x)|\, dx\le C|h|^\alpha
$$
for some constant $C>0$.

\vskip .1in

We now formulate several known results that will be used in the proofs.

\begin{theorem}[see \cite{Klartag}, \cite{Ball}]\label{t1.1}
For every $k\in \mathbb{N}$, there is $C_k$ depending only on  $k$ such that
for every isotropic log-concave measure $\mu$ on $\mathbb{R}^k$
with density $\rho$
one has
$$
(\max\rho)^{1/k}\le C_k.
$$
\end{theorem}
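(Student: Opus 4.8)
My plan is to recognize the quantity $(\max\rho)^{1/k}$ as (a power of) the \emph{isotropic constant} of $\mu$: since $\mu$ is isotropic its covariance is the identity, so bounding $(\max\rho)^{1/k}$ by a constant $C_k$ is exactly the assertion that the isotropic constant of a $k$-dimensional log-concave measure admits a bound depending only on the dimension. I would first record the easy half, which also fixes the normalization: writing $M:=\max\rho$ and using log-concavity, the super-level set $K:=\{x:\rho(x)\ge M/e\}$ is convex, and from $1=\int_{\mathbb{R}^k}\rho\ge\frac{M}{e}\,\mathrm{vol}(K)$ one gets $\mathrm{vol}(K)\le e/M$. Thus an upper bound on $M$ is equivalent to a \emph{lower} bound $\mathrm{vol}(K)\ge c_k/M$, i.e.\ to the statement that, in isotropic position, the relevant convex body cannot be too small.

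The main step is to produce such a lower bound from the isotropy constraint, and here I would pass to convex bodies. By Ball's construction one associates to the log-concave density $\rho$ a convex body (the body $K_{k+1}(\rho)$ of normalized radial integrals of $\rho$) whose volume is a fixed multiple of $M^{-1}\int\rho$ and whose covariance is comparable, up to factors depending only on $k$, to that of $\mu$. This reduces everything to a uniform measure on a convex body, for which the density is constant and the problem becomes purely volumetric. For a convex body $L$ in isotropic position (barycenter at the origin and covariance equal to the identity), John's theorem supplies, after a translation, an ellipsoid $\mathcal{E}$ with $\mathcal{E}\subseteq L\subseteq k\,\mathcal{E}$; since the covariance is the identity, $\mathcal{E}$ is comparable to a Euclidean ball of radius of order $\sqrt{k}$ within factors depending only on $k$, whence $\mathrm{vol}(L)\ge c_k^{\,k}$. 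Feeding this back through the correspondence yields $M\le C_k^{\,k}$, that is, $(\max\rho)^{1/k}\le C_k$.

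The delicate point is the bookkeeping of the dimensional constants, not any single inequality. Concretely, the work lies in (i) verifying that Ball's body $K_{k+1}(\rho)$ is indeed convex and that its volume and covariance encode $\max\rho$ and the covariance of $\mu$ with losses depending only on $k$, and (ii) converting the John inclusion into a genuine volume bound while keeping track of the isotropic normalization. I expect (i) to be the real obstacle, since it is where log-concavity is essential and where a careless estimate would destroy the dimension-only dependence. It is worth emphasising that this argument only yields a constant $C_k$ that grows with $k$ (the John step alone loses powers of $k$); obtaining an \emph{absolute} bound independent of $k$ is the slicing (hyperplane) problem, far deeper than what is needed here, so for the present statement the crude, dimension-dependent route is entirely sufficient.
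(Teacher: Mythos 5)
The paper does not prove Theorem \ref{t1.1} at all: it is imported as a known result, with the dimension-dependent bound going back to Ball \cite{Ball} and the best known constant $C_k\sim k^{1/4}$ to Klartag \cite{Klartag}. So there is no in-paper argument to compare with, and I can only judge your outline on its own terms. What you propose is exactly the classical route of \cite{Ball}: replace the log-concave density by one of Ball's bodies $K_p(\rho)$, whose convexity is the substance of that paper, and then extract a volume lower bound from John's theorem. As a strategy this is correct and, at the level of citations, complete; you are also right that the convexity of $K_p(\rho)$ and the comparison of its volume and covariance with $\max\rho$ and ${\rm Cov}(\mu)$ is where all the work lies (for matching second moments one normally takes $K_{k+2}(\rho)$ rather than $K_{k+1}(\rho)$, but since only $k$-dependent constants are at stake this is immaterial).

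Two points in your sketch are garbled and should be fixed, precisely because you identify ``bookkeeping of dimensional constants'' as the delicate issue. First, the claimed equivalence in your opening paragraph is vacuous: a bound of the form $\mathrm{vol}(K)\ge c_k/M$ is automatic for any log-concave density (the level set $\{\rho\ge M/e\}$ always has volume at least $c_k/M$ with $c_k$ of order $1/k!$, by comparing $\int\rho$ with $\int_0^\infty e^{-t}\mathrm{vol}(\{\rho\ge e^{-t}M\})\,dt$), and combined with $\mathrm{vol}(K)\le e/M$ it yields nothing. What isotropy must deliver is a lower bound on $\mathrm{vol}(K)$ that is \emph{free of} $M$, depending only on $k$. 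Second, in the covariance-identity normalization the John ellipsoid is forced from \emph{inside} by a ball of radius of order $1/k$, not $\sqrt{k}$ (the radius-$\sqrt{k}$ picture belongs to the volume-one normalization of isotropic bodies): since the variance of $L$ in every direction equals $1$, the width of $L$ in every direction is bounded below by an absolute constant, hence $\mathcal{E}\supseteq c k^{-1}B_2^k$ and $\mathrm{vol}(L)\ge (c/k^{3/2})^{k}$. This still gives a legitimate $C_k$ (of polynomial growth in $k$), which is all the theorem asks for, but the constants do not come out the way your sketch suggests.
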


There is an open conjecture
(the so-called hyperplane conjecture) that the constant above
can be chosen independent of dimension $k$. However,
the best known constant so far is  $C_k\sim k^{1/4}$, which  is due to Klartag \cite{Klartag}.

The following result is Corollary 2.4 in \cite{Klartag07}.

\begin{theorem}[see \cite{Klartag07}]\label{t1.2}
For every $k\in\mathbb{N}$,
there are universal constants $C, c>0$
such that, for every
isotropic log-concave measure $\mu$ on $\mathbb{R}^k$
with density $\rho$, one has
$$
\rho(x)\le \rho(0) e^{Ck - c|x|} \quad \forall x\in\mathbb{R}^k.
$$
\end{theorem}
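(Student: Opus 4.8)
The plan is to work with the convex potential $V = -\log\rho$ and to reduce the claimed inequality to a one-dimensional decay estimate along rays. Writing $\rho = e^{-V}$ with $V$ convex, the assertion is equivalent to the lower bound
$$
V(x) - V(0) \ge c|x| - Ck \quad \forall x\in\mathbb{R}^k .
$$
First I would dispose of the difference between the value at the origin and the maximum of the density. Since $\mu$ is isotropic its barycenter is the origin, and Fradelizi's inequality (the value of a log-concave density at its barycenter is at least $e^{-k}$ times its supremum) yields $\max\rho \le e^{k}\rho(0)$. Hence it is enough to establish the decay relative to the maximum,
$$
\rho(x) \le (\max\rho)\, e^{C'k - c|x|},
$$
with universal $C', c$, and then to absorb the factor $e^{k}$ into the final constant $C$.

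For the decay estimate I would fix $x$, choose a mode $x_*$ of $\rho$ (so $\rho(x_*) = \max\rho =: m$) and set $u = (x - x_*)/|x - x_*|$. Along the ray $r\mapsto x_* + ru$ the restriction $g(r) := \rho(x_* + ru)$ is log-concave and attains its maximum at $r = 0$, hence is nonincreasing; writing $g(r) = m\,e^{-\phi(r)}$ with $\phi$ convex, increasing and $\phi(0) = 0$, the task reduces to the universal linear lower bound $\phi(r) \ge cr - C''k$. Here convexity of $\phi$ is the decisive structural fact: as soon as $\phi$ acquires slope at least $c$ at some radius $r_0$, the inequality $\phi(r) \ge \phi(r_0) + c(r - r_0)$ holds for all $r \ge r_0$. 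It therefore suffices to exhibit a universal positive slope at a radius $r_0 = O(k)$, and to bound $|x_*| = O(k)$ so that decay in $|x - x_*|$ transfers to decay in $|x|$ at the cost of an extra factor $e^{O(k)}$.

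The step I expect to be the main obstacle is producing this slope bound with a dimension-free constant, that is, transferring the global second-moment (isotropy) condition to the one-dimensional profile $\phi$. A purely volumetric approach through the convex superlevel sets $K_a := \{\rho \ge a\}$, which satisfy $|K_a| \le 1/a$ by $\int\rho = 1$ and whose diameter is forced to grow when they reach far-away points, only yields polynomial decay in $|x|$ and is too weak. The exponential rate must instead be extracted from convexity together with isotropy: the unit variance of $\mu$ in the direction $u$ bounds the length of the near-maximal segment $\{r : \phi(r) \le 1\}$ by a universal constant, which prevents $\phi$ from remaining flat and forces slope at least $c$ at a controlled radius, after which convexity propagates this slope to all larger $r$. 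Making the bound on this near-maximal segment uniform over directions is the essential difficulty. Assembling the pieces---exponential decay from the mode for $|x|$ beyond a universal multiple of $k$, and the crude estimate $\rho(x) \le \max\rho \le e^{k}\rho(0)$ for smaller $|x|$---then gives the stated bound $\rho(x) \le \rho(0)\, e^{Ck - c|x|}$.
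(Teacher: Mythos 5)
First, note that the paper does not prove this statement at all: Theorem~\ref{t1.2} is imported verbatim as Corollary~2.4 of \cite{Klartag07}, so there is no internal argument to compare yours against and your sketch has to stand on its own. Several of its components are sound: the reduction $\max\rho\le e^{k}\rho(0)$ via Fradelizi's inequality at the barycenter is correct, as is the observation that once the convex profile $\phi$ along a ray from the mode attains slope $c$ at some radius $r_0=O(k)$, convexity propagates the linear lower bound $\phi(r)\ge c r-C''k$ to all larger radii. You also correctly anticipate that the decisive difficulty is producing that slope with a dimension-free constant.

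That step, however, is not closed, and the mechanism you propose for it is false. You assert that ``the unit variance of $\mu$ in the direction $u$ bounds the length of the near-maximal segment $\{r:\phi(r)\le 1\}$ by a universal constant.'' Take $\mu$ to be the uniform (hence log-concave, symmetric, isotropic after scaling) distribution on the cross-polytope $\{x:\sum_{i}|x_i|\le R\}$ with $R=\sqrt{(k+1)(k+2)/2}\asymp k$. Its density is constant on its support, so along the ray from the mode $x_*=0$ towards a vertex one has $\phi\equiv 0$ on a segment of length $\asymp k$, even though the variance in that direction equals $1$. The directional variance controls the one-dimensional \emph{marginal} $t\mapsto\int_{u^\perp}\rho(x_*+tu+y)\,dy$, not the restriction of $\rho$ to the line; the marginal of the cross-polytope decays because the cross-sections shrink, while the density on the ray does not decay at all. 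Consequently the best bound of this type on the near-maximal segment is $O(k)$, which via your convexity argument yields only a slope $c/k$ and hence a decay $e^{-c|x|/k}$ --- strictly weaker than the claimed $e^{Ck-c|x|}$. So the central quantitative input is missing, and it cannot be supplied by the route you describe; obtaining the universal rate $c$ genuinely requires the heavier machinery of \cite{Klartag07} (level-set volume estimates via Brunn--Minkowski concavity of $t\mapsto|\{\rho\ge e^{-t}\max\rho\}|^{1/k}$, the entropy lower bound $\max\rho\ge(2\pi e)^{-k/2}$, and one-dimensional marginal estimates), not a single directional-variance bound along a ray. A secondary unproved claim is $|x_*|=O(k)$ for the mode, which also needs an argument and is sharp, again by the cross-polytope or the isotropic simplex.
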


We say that a probability measure $\nu$ on $\mathbb{R}$
is represented as a convex mixture of uniform distributions
with a mixing measure $\pi$ if $\pi$ is a probability measure
on the half plane $\{a<b\}$ such that
$$
\int_\mathbb{R}\varphi\, d\nu = \int_{\{a<b\}} \tfrac{1}{b-a}\int_a^b\varphi(x)\, dx\, \pi(dadb)
\quad \forall \varphi\in C_0^\infty(\mathbb{R}).
$$
For simplicity we denote the uniform distribution in the interval $[a,b]$
by $\nu_{[a,b]}$.

The following theorem is Lemma 4.3 in \cite{BChG}.
This theorem plays a crucial role in the proof of the first main result
of this paper.

\begin{theorem}[see \cite{BChG}]\label{t1.3}
Any Skorohod differentiable probability measure $\nu$ on
$\mathbb{R}$ can be represented as a convex
mixture
$$
\nu=\int \nu_{[a,b]}\, \pi(dadb)
$$
of uniform distributions with a mixing measure $\pi$
such that
$$
\|\nu'\|_{\rm TV}=\int \|\nu_{[a,b]}'\|_{\rm TV}\, \pi(dadb).
$$
\end{theorem}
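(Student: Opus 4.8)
The plan is to realise the required mixture explicitly through the layer-cake (distribution function) decomposition of the density and to verify the norm identity by means of the one-dimensional coarea formula. First I would isolate the easy half of the statement. For \emph{any} representation $\nu=\int\nu_{[a,b]}\,\pi(dadb)$, testing against $\varphi\in C_0^\infty(\mathbb{R})$ and integrating by parts under the integral sign shows that $\nu'=\int\nu_{[a,b]}'\,\pi(dadb)$ as a vector-valued integral of bounded measures; the triangle inequality for the total variation norm then gives
$$
\|\nu'\|_{\rm TV}\le\int\|\nu_{[a,b]}'\|_{\rm TV}\,\pi(dadb).
$$
Hence the entire content of the theorem is to exhibit one mixture for which this inequality becomes an equality, i.e. a representation in which the positive and negative parts of the derivatives $\nu_{[a,b]}'=\tfrac{1}{b-a}(\delta_a-\delta_b)$ do not cancel against one another.

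Let $\rho$ be the density of $\nu$, chosen as the precise $BV$ representative, so that $\|\nu'\|_{\rm TV}$ equals its variation $V(\rho)$ and the coarea formula applies. Since $\rho\ge0$ and $\int\rho=1$, the layer-cake formula gives $\rho(x)=\int_0^\infty I_{\{\rho>s\}}(x)\,ds$. For a.e. $s>0$ the super-level set $E_s:=\{\rho>s\}$ has finite measure and finite perimeter, hence, up to a Lebesgue-null set, is a finite disjoint union of bounded open intervals $E_s=\bigsqcup_{i=1}^{N(s)}(a_i(s),b_i(s))$, whose essential boundary consists of the $2N(s)$ endpoints. Writing $I_{(a_i,b_i)}=(b_i-a_i)\,\rho_{[a_i,b_i]}$, where $\rho_{[a,b]}$ denotes the density of $\nu_{[a,b]}$, I would define the mixing measure $\pi$ on $\{a<b\}$ by
$$
\pi(A):=\int_0^\infty\sum_{i=1}^{N(s)}(b_i(s)-a_i(s))\,I_A\bigl(a_i(s),b_i(s)\bigr)\,ds .
$$
Its total mass equals $\int_0^\infty\sum_i(b_i-a_i)\,ds=\int_0^\infty|E_s|\,ds=\int\rho=1$, so $\pi$ is a probability measure.

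With this choice, Tonelli's theorem applied to the layer-cake formula yields, for every $\varphi\in C_0^\infty(\mathbb{R})$,
$$
\int\varphi\,d\nu=\int\varphi\rho\,dx=\int_0^\infty\sum_i(b_i-a_i)\Bigl[\tfrac{1}{b_i-a_i}\int_{a_i}^{b_i}\varphi\Bigr]ds=\int\Bigl[\int\varphi\,d\nu_{[a,b]}\Bigr]\pi(dadb),
$$
which is exactly the mixture representation. For the norms, using $\|\nu_{[a,b]}'\|_{\rm TV}=\tfrac{2}{b-a}$ one computes
$$
\int\|\nu_{[a,b]}'\|_{\rm TV}\,\pi(dadb)=\int_0^\infty\sum_i(b_i-a_i)\cdot\tfrac{2}{b_i-a_i}\,ds=\int_0^\infty 2N(s)\,ds=\int_0^\infty\mathrm{Per}(E_s)\,ds,
$$
and the one-dimensional coarea formula for $BV$ functions identifies the last integral with $V(\rho)=\|\nu'\|_{\rm TV}$, giving the desired equality.

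The main obstacle is not the two displayed computations but the measurability and selection issues concealed in the phrase ``$E_s=\bigsqcup_i(a_i(s),b_i(s))$''. I would need to enumerate the connected components so that each map $s\mapsto(a_i(s),b_i(s))$ is Borel (so that $\pi$ is a genuine Borel measure and Tonelli is legitimate), and to justify $\mathrm{Per}(E_s)=2N(s)$ together with the coarea identity for the chosen representative of $\rho$; both rest on standard one-dimensional $BV$ theory but require care in passing from the pointwise picture to the measure-theoretic one. A secondary point is to check that the null set of ``bad'' levels $s$ (where $E_s$ fails to be a finite union of bounded intervals) does not contribute, which follows at once from $\int_0^\infty 2N(s)\,ds=V(\rho)<\infty$.
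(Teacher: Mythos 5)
The paper does not actually prove this statement: it is quoted verbatim from \cite{BChG} (Lemma 4.3 there), so there is no internal proof to compare against. Your layer-cake/coarea argument is correct and is essentially the standard proof of this decomposition: for a nonnegative $BV$ density with $\int\rho=1$, almost every superlevel set $E_s$ has finite measure and finite perimeter, hence is (up to a null set) a finite union of bounded intervals with positive gaps; the induced mixing measure has total mass $\int_0^\infty|E_s|\,ds=1$ and reproduces $\nu$ by Tonelli; and the one-dimensional coarea formula turns $\int_0^\infty 2N(s)\,ds=\int_0^\infty\mathrm{Per}(E_s)\,ds$ into $\|\nu'\|_{\rm TV}$. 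Your preliminary observation that $\|\nu'\|_{\rm TV}\le\int\|\nu_{[a,b]}'\|_{\rm TV}\,\pi(dadb)$ holds for \emph{every} mixture correctly identifies the content of the theorem as achieving equality, i.e.\ producing a mixture with no cancellation between the atoms of the $\nu_{[a,b]}'$. The two technical points you flag --- a Borel enumeration of the components so that $\pi$ is a genuine Borel measure, and the identification $\mathrm{Per}(E_s)=2N(s)$ for the measure-theoretic components (so that touching intervals are merged and no spurious boundary points are counted) --- are real but standard in one-dimensional $BV$ theory and do not affect the validity of the argument.
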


We recall that $\|\nu_{[a,b]}'\|_{\rm TV} = \tfrac{2}{b-a}$.

We now present two key lemmas from \cite{Kos} and \cite{BKZ}
that will be used in the second part of the paper.
The first one provides a sufficient condition
for a measure on $\mathbb{R}$ to possess a density from a
certain Nikolskii--Besov class
(see also \cite{BKP} and \cite{KosBes} for further development of this approach).

\begin{lemma}[see \cite{Kos}, \cite{BKZ}]\label{lem1.1}
Let $\nu$ be a Borel probability  measure on the real line.
Assume that for every function
$\varphi\in C_0^\infty(\mathbb{R})$ with $\|\varphi\|_\infty\le1$
one has
$$
\int\varphi'd\nu\le C\|\varphi'\|_\infty^{1-\alpha}.
$$
Then
$$
\|\nu_h-\nu\|_{\rm TV}\le 2^{1-\alpha}C|h|^\alpha\quad \forall\, h\in\mathbb{R},
$$
where $\nu_h(A):=\nu (A-h)$ for all Borel sets $A\subset \mathbb{R}$.
In particular, the density of the measure $\nu$ belongs to the Nikolskii--Besov class
$B^\alpha_{1,\infty}(\mathbb{R})$.
\end{lemma}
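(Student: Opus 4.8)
The plan is to estimate $\|\nu_h-\nu\|_{\rm TV}$ directly from the dual (test-function) description of the total variation norm and then read off the Nikolskii--Besov property. First I would rewrite the hypothesis in homogeneous form: applying it to $\varphi/\|\varphi\|_\infty$ for $\varphi\not\equiv 0$ gives
$$
\int\varphi'\,d\nu\le C\|\varphi\|_\infty^{\alpha}\|\varphi'\|_\infty^{1-\alpha}
\quad\forall\,\varphi\in C_0^\infty(\mathbb{R}).
$$
It is this scaled inequality, rather than the normalized one, that produces the factor $|h|^\alpha$.

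The decisive step is the choice of test function. Fix $\psi\in C_0^\infty(\mathbb{R})$ with $\|\psi\|_\infty\le1$ and $h\ne0$, and set $\varphi(x):=\int_x^{x+h}\psi(t)\,dt$. Then $\varphi\in C_0^\infty(\mathbb{R})$ (it vanishes outside a bounded set because $\psi$ has compact support), $\varphi'(x)=\psi(x+h)-\psi(x)$, and one has the elementary bounds $\|\varphi\|_\infty\le|h|$ and $\|\varphi'\|_\infty\le2$. Since $\nu_h$ is the image of $\nu$ under the map $x\mapsto x+h$, we obtain $\int\psi\,d(\nu_h-\nu)=\int[\psi(x+h)-\psi(x)]\,d\nu=\int\varphi'\,d\nu$, and the scaled hypothesis yields $\int\psi\,d(\nu_h-\nu)\le 2^{1-\alpha}C|h|^\alpha$. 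Taking the supremum over all admissible $\psi$ and invoking the definition of the total variation norm gives $\|\nu_h-\nu\|_{\rm TV}\le 2^{1-\alpha}C|h|^\alpha$ for every $h\in\mathbb{R}$.

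It remains to pass to the statement about the density. The bound shows that $h\mapsto\nu_h$ is continuous in total variation at $h=0$, so I would approximate $\nu$ by its mollifications $\nu*\phi_\delta$ (with $\phi_\delta\ge0$, $\int\phi_\delta=1$, supported in $[-\delta,\delta]$). Writing $\int\psi\,d(\nu*\phi_\delta-\nu)=\int\bigl(\int\psi\,d(\nu_y-\nu)\bigr)\phi_\delta(y)\,dy$ and estimating the inner integral by $\|\psi\|_\infty\|\nu_y-\nu\|_{\rm TV}$, one gets $\|\nu*\phi_\delta-\nu\|_{\rm TV}\le\sup_{|y|\le\delta}\|\nu_y-\nu\|_{\rm TV}\to0$. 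Each $\nu*\phi_\delta$ is absolutely continuous, and the absolutely continuous measures form a closed subspace of the space of signed measures under the total variation norm (a TV-Cauchy sequence of densities is $L^1$-Cauchy); hence $\nu$ has a density $\rho\in L^1(\mathbb{R})$. For such $\nu$ the identity $\|\nu_h-\nu\|_{\rm TV}=\int_\mathbb{R}|\rho(x-h)-\rho(x)|\,dx$ converts the estimate above into $\int_\mathbb{R}|\rho(x-h)-\rho(x)|\,dx\le2^{1-\alpha}C|h|^\alpha$, which (after replacing $h$ by $-h$) is exactly the defining inequality for $\rho\in B^\alpha_{1,\infty}(\mathbb{R})$.

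I expect the construction of $\varphi$ to be the creative heart of the argument: it turns the increment $\psi(x+h)-\psi(x)$ into a derivative whose antiderivative has sup-norm controlled by $|h|$, which is precisely the mechanism that generates the exponent $\alpha$. The step demanding the most care is the passage to a genuine density, where one must use that total-variation continuity of translations forces absolute continuity; the rest reduces to routine computation.
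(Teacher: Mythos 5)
Your argument is correct and is essentially the standard proof from the cited sources \cite{Kos}, \cite{BKZ} (the paper itself states the lemma without proof): the homogenized inequality $\int\varphi'\,d\nu\le C\|\varphi\|_\infty^\alpha\|\varphi'\|_\infty^{1-\alpha}$ applied to $\varphi(x)=\int_x^{x+h}\psi(t)\,dt$, with $\|\varphi\|_\infty\le|h|$ and $\|\varphi'\|_\infty\le2$, is exactly the mechanism that yields the constant $2^{1-\alpha}C|h|^\alpha$. The mollification step establishing absolute continuity of $\nu$ is also handled correctly.
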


We also need the following result.

\begin{lemma}[see \cite{Kos}]\label{lem1.2}
Let $\nu$ be a Borel probability measure on the real line.
Assume that for every function $\varphi\in C_0^\infty(\mathbb{R})$ with $\|\varphi\|_\infty\le1$
one has
$$
\int\varphi'd\nu\le C\|\varphi'\|_\infty^{1-\alpha}.
$$
Then, for every Borel set $A$, one has
$$
\nu(A)\le C\lambda(A)^\alpha,
$$
where $\lambda$ is the standard Lebesgue measure on the real line.
Moreover, if $\rho_\nu$ is the density of the measure $\nu$,
then $\rho_\nu\in L^p(\lambda)$ whenever $1<p<\frac{1}{1-\alpha}$ and one has
$$
\|\rho_\nu\|_{L^p(\lambda)}\le \biggl(p + p\Bigl(\frac{1}{1-\alpha}-p\Bigr)^{-1}\biggr)^{1/p}C^{\frac{1}{\alpha}(1-1/p)}.
$$
\end{lemma}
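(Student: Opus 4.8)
The plan is to establish the distributional bound $\nu(A)\le C\lambda(A)^\alpha$ first, and then to read off from it a weak-type estimate for the density from which the $L^p$ bound follows by a layer-cake computation. For the first bound I would test the hypothesis against functions manufactured from the set $A$ itself. It suffices to treat an open set $A$ of finite measure, since for a general Borel set outer regularity gives $\nu(A)\le\nu(U)$ for open $U\supset A$ while $\inf_U\lambda(U)=\lambda(A)$. Writing $m=\lambda(A)$ and $u(x)=\tfrac1m\lambda\bigl(A\cap(-\infty,x)\bigr)$, one has $0\le u\le1$, $u'=\tfrac1m I_A$ and formally $\int u'\,d\nu=\tfrac1m\nu(A)$, but $u$ is neither smooth nor compactly supported. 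I would therefore mollify, replacing $u'$ by $\tfrac1m(I_A*\psi_\varepsilon)$ (which preserves $\|u'\|_\infty\le\tfrac1m$), and subtract a smooth bump returning $u$ to $0$ far to the right over an interval of length $\ge m$ (which keeps $\|u'\|_\infty\le(1+\delta)/m$ and contributes a nonnegative error tending to $0$ as the bump recedes). Applying the hypothesis to the resulting $\varphi\in C_0^\infty(\mathbb R)$ with $\|\varphi\|_\infty\le1$, and using that $\liminf_{\varepsilon\to0}(I_A*\psi_\varepsilon)\ge I_A$ pointwise on the open set $A$, Fatou's lemma gives $\tfrac1m\nu(A)\le\liminf_\varepsilon\int u'\,d\nu\le C\bigl((1+\delta)/m\bigr)^{1-\alpha}+o(1)$; letting the bump recede and $\delta\to0$ yields $\nu(A)\le Cm^\alpha$.

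From $\nu(A)\le C\lambda(A)^\alpha$ it follows that $\nu\ll\lambda$, so $\nu$ has a density $\rho=\rho_\nu$. Applying the bound to the level set $\{\rho>t\}$ and combining it with Chebyshev's inequality $t\,\lambda(\rho>t)\le\int_{\{\rho>t\}}\rho\,d\lambda=\nu(\rho>t)\le C\lambda(\rho>t)^\alpha$ yields the weak-type estimate $\lambda(\rho>t)\le C^{1/(1-\alpha)}t^{-1/(1-\alpha)}$, and hence $\nu(\rho>t)\le C^{1/(1-\alpha)}t^{-\alpha/(1-\alpha)}$.

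For the $L^p$ bound I would write, by the layer-cake formula, $\int\rho^p\,d\lambda=\int\rho^{p-1}\,d\nu=\int_0^\infty(p-1)t^{p-2}\,\nu(\rho>t)\,dt$ and split the integral at the threshold $t_0=C^{1/\alpha}$, which is exactly where the trivial bound $\nu(\rho>t)\le1$ and the weak-type bound cross. On $(0,t_0)$ I bound $\nu(\rho>t)\le1$, integrable against $t^{p-2}$ precisely because $p>1$; on $(t_0,\infty)$ I use the weak-type bound, integrable against $t^{p-2}$ precisely because $p<\tfrac{1}{1-\alpha}$. Each piece is a multiple of $t_0^{p-1}=C^{(p-1)/\alpha}$, and collecting the two constants and taking the $p$-th root gives the asserted estimate, the exponent $\tfrac1\alpha(1-1/p)$ being simply $\tfrac{p-1}{\alpha p}$; this part is routine once the threshold is identified.

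The main obstacle is the test-function construction in the first step: one must produce a genuine $C_0^\infty(\mathbb R)$ function of sup-norm at most $1$ whose derivative simultaneously has sup-norm $(1+o(1))/\lambda(A)$ and, in the limit, detects $\nu(A)/\lambda(A)$. The truncation at infinity and the mollification must be arranged so that neither violates $\|\varphi\|_\infty\le1$ nor inflates $\|\varphi'\|_\infty$, since the conclusion carries the same sharp constant $C$ as the hypothesis and nothing can be wasted here; the use of Fatou's lemma (rather than a density argument) is what lets one do this before $\nu\ll\lambda$ is known.
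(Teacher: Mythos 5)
Your proposal is correct; note that the paper itself does not prove this lemma but imports it from \cite{Kos}, and your argument reconstructs essentially the standard proof given there: testing against a mollified, truncated primitive of $\lambda(A)^{-1}I_A$ to get $\nu(A)\le C\lambda(A)^\alpha$ with the sharp constant, then deriving the weak-type bound $\lambda(\rho_\nu>t)\le C^{1/(1-\alpha)}t^{-1/(1-\alpha)}$ and integrating by the layer-cake formula with the split at $t_0=C^{1/\alpha}$. The only points to tidy up are routine: reduce to bounded open sets (via inner regularity, or truncate the primitive on the left as well when $A$ is unbounded below), and your final constant $1+(p-1)\bigl(\tfrac{1}{1-\alpha}-p\bigr)^{-1}$ is in fact slightly better than the one stated in the lemma.
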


The second part of this lemma is the usual embedding theorem
for Nikolskii--Besov spaces.

\section{Linear images of products of Skorohod differentiable measures}

We start with the one-dimensional case.

\begin{theorem}
Let $\mu_1,\ldots, \mu_n$ be Skorohod differentiable
probability measures on $\mathbb{R}$. Then,
for any linear functional $f(x)=\langle a, x\rangle$, where $a=(a_1,\ldots, a_n)\in \mathbb{R}^n$
with $|a|=1$,
one has
$$
\|(\mu\circ f^{-1})'\|_{\rm TV}\le \sqrt2 \max\limits_{1\le j\le n}\|\mu_j'\|_{\rm TV},
$$
where $\mu:=\mu_1\otimes\ldots\otimes\mu_n$.
\end{theorem}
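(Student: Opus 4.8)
The plan is to use the mixture representation of Theorem~\ref{t1.3} to reduce everything to the uniform case and then invoke Ball's cube-slicing inequality from \cite{B-Cube1}. By Theorem~\ref{t1.3} I would write each factor as $\mu_j=\int \nu_{[u_j,v_j]}\,\pi_j(du_j\,dv_j)$ with $\|\mu_j'\|_{\rm TV}=\int \tfrac{2}{v_j-u_j}\,\pi_j$. Forming the product then exhibits $\mu=\mu_1\otimes\cdots\otimes\mu_n$ as a convex mixture, with mixing measure $\Pi:=\pi_1\otimes\cdots\otimes\pi_n$, of the uniform distributions $\nu_{u,v}:=\nu_{[u_1,v_1]}\otimes\cdots\otimes\nu_{[u_n,v_n]}$ on the boxes $\prod_j[u_j,v_j]$, where $u=(u_1,\ldots,u_n)$ and $v=(v_1,\ldots,v_n)$. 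Since both the image map $\sigma\mapsto\sigma\circ f^{-1}$ and differentiation $\sigma\mapsto\sigma'$ are linear and commute with integration against $\Pi$, and since the total variation norm is the supremum of $\sigma\mapsto\int\varphi\,d\sigma$ over $\|\varphi\|_\infty\le1$, I would obtain
$$
\|(\mu\circ f^{-1})'\|_{\rm TV}\le \int \|(\nu_{u,v}\circ f^{-1})'\|_{\rm TV}\,\Pi(du\,dv).
$$
Thus it suffices to bound the derivative norm of the image of a single uniform distribution on a box and then integrate. Note that the mixture representation here plays the role that Rogozin's inequality plays in the density estimates, so only Ball's bound will be needed below.

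Fix a box with side lengths $\ell_j:=v_j-u_j$. The image $\nu_{u,v}\circ f^{-1}$ is the distribution of $\sum_j a_jU_j$, where the $U_j$ are independent and uniform on $[u_j,v_j]$; as a one-dimensional linear image of a uniform distribution on a convex body it is log-concave. In dimension one the formula recalled in Section~2 reads $\|\sigma'\|_{\rm TV}=2\max\rho_\sigma$. The density of $\sum_j a_jU_j$ equals, up to a translation, that of $\sum_j(|a_j|\ell_j)V_j$ with $V_j$ independent and uniform on $[-1/2,1/2]$. Writing $s:=\bigl(\sum_j a_j^2\ell_j^2\bigr)^{1/2}$ and normalizing the coefficients to the unit vector $(|a_j|\ell_j/s)_j$, Ball's cube-slicing inequality bounds the density of $\sum_j(|a_j|\ell_j/s)V_j$ by $\sqrt2$; rescaling by $s$ gives
$$
\|(\nu_{u,v}\circ f^{-1})'\|_{\rm TV}=2\max\rho\le \frac{2\sqrt2}{\bigl(\sum_j a_j^2\ell_j^2\bigr)^{1/2}}.
$$

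It then remains to integrate this bound against $\Pi$ and recover the sharp constant. The key elementary step is that, since $\sum_j a_j^2=1$, the weights $a_j^2$ form a probability vector, so two applications of Jensen's inequality (equivalently, Cauchy--Schwarz) yield
$$
\frac{1}{\bigl(\sum_j a_j^2\ell_j^2\bigr)^{1/2}}\le \frac{1}{\sum_j a_j^2\ell_j}\le \sum_j a_j^2\frac{1}{\ell_j}.
$$
Integrating and using that $\Pi$ is a product measure (each factor other than the $j$-th integrating to one) gives
$$
\int \frac{2}{\bigl(\sum_j a_j^2\ell_j^2\bigr)^{1/2}}\,\Pi\le \sum_j a_j^2\int \frac{2}{\ell_j}\,\pi_j=\sum_j a_j^2\,\|\mu_j'\|_{\rm TV}\le \Bigl(\max_{1\le j\le n}\|\mu_j'\|_{\rm TV}\Bigr)\sum_j a_j^2=\max_{1\le j\le n}\|\mu_j'\|_{\rm TV}.
$$
Combining this with the per-box estimate yields $\|(\mu\circ f^{-1})'\|_{\rm TV}\le\sqrt2\max_j\|\mu_j'\|_{\rm TV}$, as claimed.

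Two points deserve care. First, the commutation of the Skorohod derivative with the mixing integral must be justified: one should verify that $\int(\nu_{u,v}\circ f^{-1})'\,\Pi$ is a genuine finite signed measure and is the derivative of $\mu\circ f^{-1}$. This is precisely where the finiteness supplied by the final estimate is used, since the integrand is $\Pi$-integrable because $\int \ell_j^{-1}\,\pi_j=\tfrac12\|\mu_j'\|_{\rm TV}<\infty$; accordingly, the per-box bound should be established before passing to the mixture. Second, I expect the main obstacle to be pinning down the exact constant $\sqrt2$: it hinges on combining Ball's \emph{sharp} cube-slicing bound with the elementary inequality above in just the right way, exploiting $\sum_j a_j^2=1$ so that the maximum over $j$ survives the integration; any cruder estimate loses sharpness. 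The application of Ball's inequality also needs attention in the degenerate cases (some $a_j=0$, or a single nonzero coefficient), where the section-volume interpretation reduces to a lower-dimensional cube or to a single uniform density, and one must check that the bound $\sqrt2/s$ still holds there.
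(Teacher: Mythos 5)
Your proposal is correct and follows essentially the same route as the paper: the mixture representation of Theorem~\ref{t1.3}, reduction of each box to the unit cube, Ball's cube-slicing bound $\rho_\theta(0)\le\sqrt2$ combined with evenness and log-concavity to control $\|\cdot'\|_{\rm TV}=2\max\rho$, and the convexity inequality $(\sum_j a_j^2\ell_j^2)^{-1/2}\le\sum_j a_j^2\ell_j^{-1}$ before integrating against the mixing measures. The only (cosmetic) difference is that the paper keeps a test function $\varphi$ throughout, which makes the interchange of differentiation with the mixing integral automatic, whereas you perform it at the level of measures and then justify it by integrability.
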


\begin{proof}
Applying Theorem \ref{t1.3},
for an arbitrary function $\varphi\in C_0^\infty(\mathbb{R})$ with $\|\varphi\|_\infty\le1$,
we have
$$
\int \varphi'(\langle a, x\rangle)\, \mu(dx)
=
\int\tfrac{1}{(\beta_1-\alpha_1)\ldots(\beta_n-\alpha_n)}\Bigl[\int\limits_{\prod_{j=1}^n[\alpha_j,\beta_j]}
\varphi'(\langle a, x\rangle)\, dx\Bigr]\pi_1(d\alpha_1d\beta_1)\ldots\pi_n(d\alpha_nd\beta_n).
$$
Note that the inner integral can be represented as
$$
\int_{Q_n} \varphi'\bigl(\langle La, x\rangle + \langle a,c\rangle\bigr)\, dx,
$$
where $Q_n=[-1/2, 1/2]^n$ is the unit cube in $\mathbb{R}^n$,
$\ell_j=\beta_j-\alpha_j$, $L={\rm diag}(\ell_1,\ldots,\ell_n)$, and where
$c=2^{-1}(\alpha_1+\beta_1, \ldots, \alpha_n+\beta_n)$.
Let
$\theta = |La|^{-1}La$, let $\psi(t) = \varphi(|La|t+\langle a,c\rangle)$
and let $\rho_\theta$ be the distribution density of the  random variable $\langle\theta, x\rangle$
on the probability space $(\mathbb{R}^n, \mathcal{B}(\mathbb{R}^n), I_{Q_n}dx)$.
Then
\begin{multline*}
\int_{Q_n} \varphi'\bigl(\langle La, x\rangle + \langle a,c\rangle\bigr)\, dx
=
|La|^{-1}\int \psi'(t)\rho_\theta(t)\, dt
\\
\le
|La|^{-1}\|\rho_\theta'\|_{\rm TV}
=
|La|^{-1}2\max\rho_\theta=
2|La|^{-1}\rho_\theta(0),
\end{multline*}
since $\rho_\theta$ is even and logarithmically concave.
Note that
$\rho_\theta(0) = |\langle\theta\rangle^\bot\cap Q_n|\le \sqrt2$ due to
\cite{B-Cube1} (see also \cite{B-Cube2}).
Thus,
$$
\int \varphi'(\langle a, x\rangle)\, \mu(dx)
\le
2\sqrt2 \int\Bigl(\sum_{j=1}^n a_j^2\ell_j^2\Bigr)^{-1/2}\, \pi_1(d\alpha_1d\beta_1)\ldots\pi_n(d\alpha_nd\beta_n),
$$
where $\ell_j=\beta_j-\alpha_j$.
By the convexity of the function $t\to t^{-1/2}$ for $t>0$
we have
$$
\Bigl(\sum_{j=1}^n a_j^2\ell_j^2\Bigr)^{-1/2}
\le
\sum_{j=1}^n a_j^2 \ell_j^{-1}.
$$
We also recall that $\|\nu_{[\alpha_j,\beta_j]}'\|_{\rm TV} = 2\ell_j^{-1}$,
where $\nu_{[\alpha_j,\beta_j]}$ is the uniform distribution on the interval $[\alpha_j,\beta_j]$,
which implies that
\begin{multline*}
\int \varphi'(\langle a, x\rangle)\, \mu(dx)
\le
\sqrt2 \int\sum_{j=1}^na_j^2\|\nu_{[\alpha_j,\beta_j]}'\|_{\rm TV}\, \pi_1(d\alpha_1d\beta_1)\ldots\pi_n(d\alpha_nd\beta_n)
\\
=
\sqrt2\sum_{j=1}^na_j^2\|\mu_j'\|_{\rm TV}
\le
\sqrt2\max\limits_{1\le j\le n}\|\mu_j'\|_{\rm TV}.
\end{multline*}
The theorem is proved.
\end{proof}

We now proceed to the multidimensional setting.

\begin{theorem}
Let $k\in\mathbb{N}$. Then there is $C(k)$, depending only on $k$,
such that,
for any Skorohod differentiable
probability measures $\mu_1,\ldots, \mu_n$ on $\mathbb{R}$
and
for any mapping $F\colon\mathbb{R}^n\to\mathbb{R}^k$
of the form $F(x)=Ax$, where $A$ is a $k\times n$ matrix
with orthonormal rows $a_1,\ldots, a_k$,
one has
$$
\|\partial_e(\mu\circ F^{-1})\|_{\rm TV}\le
C(k)\max\limits_{1\le j\le n}\|\mu_j'\|_{\rm TV}\quad \forall e\in\mathbb{R}^k,\, |e|=1,
$$
where $\mu:=\mu_1\otimes\ldots\otimes\mu_n$.
\end{theorem}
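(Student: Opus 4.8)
The plan is to follow the scheme of the one‑dimensional theorem above: reduce everything to uniform distributions on cubes by Theorem~\ref{t1.3} and then feed the resulting log‑concave measures into Theorems~\ref{t1.1} and~\ref{t1.2}. Fix a unit vector $e\in\mathbb{R}^k$ and $\varphi\in C_0^\infty(\mathbb{R}^k)$ with $\|\varphi\|_\infty\le1$. By the definition of the image measure and of the Skorohod derivative,
$$
\int_{\mathbb{R}^k}\partial_e\varphi\,d(\mu\circ F^{-1})=\int_{\mathbb{R}^n}\partial_e\varphi(Ax)\,\mu(dx),
$$
and the left side, taken over all such $\varphi$, is exactly $\|D_e(\mu\circ F^{-1})\|_{\rm TV}$. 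Applying Theorem~\ref{t1.3} to each factor $\mu_j$ and rescaling each box $\prod_j[\alpha_j,\beta_j]$ to $Q_n$ precisely as before, the inner integral becomes $\int_{Q_n}\partial_e\varphi(ALy+Ac)\,dy$, where $L=\mathrm{diag}(\ell_1,\ldots,\ell_n)$, $\ell_j=\beta_j-\alpha_j$ and $c$ is the box center. Writing $M=AL$ and letting $\sigma$ be the law of $My+Ac$ on $(\mathbb{R}^n,I_{Q_n}dy)$, this equals $\int_{\mathbb{R}^k}\partial_e\varphi\,d\sigma\le\|D_e\sigma\|_{\rm TV}$. Since $\sigma$ is a linear image of the uniform distribution on a convex body, it is log‑concave, and the task reduces to estimating $\|D_e\sigma\|_{\rm TV}$ and then integrating the estimate against $\pi_1\otimes\cdots\otimes\pi_n$.

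Next I would estimate $\|D_e\sigma\|_{\rm TV}$ through the isotropic position. Let $\Sigma=\tfrac1{12}AL^2A^{T}$ be the covariance of $\sigma$, put $S=\Sigma^{-1/2}$, and let $\tilde\sigma:=\sigma\circ S^{-1}$, which is isotropic and log‑concave. A change of variables in the defining identity gives
$$
\|D_e\sigma\|_{\rm TV}\le\|D_{Se}\tilde\sigma\|_{\rm TV}=|Se|\,\|D_{Se/|Se|}\tilde\sigma\|_{\rm TV},
$$
so it remains to bound $\sup_{|u|=1}\|D_u\tilde\sigma\|_{\rm TV}$ by a constant depending only on $k$. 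For this I would use the formula $\|D_u\tilde\sigma\|_{\rm TV}=2\int_{\langle u\rangle^\bot}\max_t\tilde\rho(x+tu)\,dx$ recalled in Section~2 together with Theorem~\ref{t1.2}: every point $x+tu$ with $x\perp u$ satisfies $|x+tu|\ge|x|$, whence $\max_t\tilde\rho(x+tu)\le\tilde\rho(0)e^{Ck-c|x|}$, and integrating the exponential over $\langle u\rangle^\bot\cong\mathbb{R}^{k-1}$ yields a factor depending only on $k$; finally $\tilde\rho(0)\le\max\tilde\rho\le C_k^k$ by Theorem~\ref{t1.1}. This gives $\sup_{|u|=1}\|D_u\tilde\sigma\|_{\rm TV}\le C'(k)$ and hence $\|D_e\sigma\|_{\rm TV}\le C'(k)|Se|$, where $|Se|=\langle\Sigma^{-1}e,e\rangle^{1/2}=\sqrt{12}\,\langle(AL^2A^{T})^{-1}e,e\rangle^{1/2}$.

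It then remains to integrate the scaling factor, that is, to prove
$$
\int\langle(AL^2A^{T})^{-1}e,e\rangle^{1/2}\,\pi_1(d\alpha_1d\beta_1)\cdots\pi_n(d\alpha_nd\beta_n)\le C''(k)\max_{1\le j\le n}\|\mu_j'\|_{\rm TV}.
$$
Recalling $\int 2\ell_j^{-1}\,\pi_j(d\alpha_jd\beta_j)=\|\mu_j'\|_{\rm TV}$, this is where one must generalize the convexity step of the one‑dimensional proof, in which $\langle(AL^2A^{T})^{-1}e,e\rangle^{1/2}=(\sum_j a_j^2\ell_j^2)^{-1/2}\le\sum_j a_j^2\ell_j^{-1}$ follows from Jensen's inequality for $t\mapsto t^{-1/2}$ with weights summing to one. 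In the matrix case the structural input is the tight‑frame identity $\sum_j c_jc_j^{T}=AA^{T}=I_k$, where $c_j\in\mathbb{R}^k$ is the $j$‑th column of $A$; one has $AL^2A^{T}=\sum_j\ell_j^2 c_jc_j^{T}$ and $\sum_j\langle c_j,e\rangle^2=1$.

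I expect this last step to be the main obstacle. The crude pointwise bounds — the Schur‑complement estimate $\langle(AL^2A^{T})^{-1}e,e\rangle\le\sum_j\langle c_j,e\rangle^2\ell_j^{-2}$ and the smallest‑eigenvalue estimate $\langle(AL^2A^{T})^{-1}e,e\rangle\le(\min_j\ell_j)^{-2}$ — both lead, after taking the square root and integrating, to a constant growing like $\sqrt n$, which is inadmissible. The difficulty is thus to exploit the over‑completeness of the frame when $n>k$ so that the mass of $\langle(AL^2A^{T})^{-1}e,e\rangle^{1/2}$ is redistributed as a genuine convex combination $\sum_j\lambda_j\ell_j^{-1}$ with $\sum_j\lambda_j$ controlled by $k$ alone. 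Since $\Sigma\mapsto\langle\Sigma^{-1}e,e\rangle^{1/2}$ is convex (being a supremum of the convex functions $\Sigma\mapsto\langle v,e\rangle/\langle\Sigma v,v\rangle^{1/2}$), the tight‑frame identity should allow one to replace the individual side lengths by suitable weighted averages $\overline{\ell^2}$ to which Jensen's inequality applies through $(\overline{\ell^2})^{-1/2}\le\overline{\ell^{-1}}$, exactly as happens automatically when $k=1$. Once such an estimate is established with $C''(k)$ depending only on $k$, combining it with $\|D_e\sigma\|_{\rm TV}\le C'(k)|Se|$ and the reduction above proves the theorem with $C(k)=\sqrt{12}\,C'(k)C''(k)$.
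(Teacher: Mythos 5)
Your reduction is exactly the paper's: the mixture representation from Theorem~\ref{t1.3}, the rescaling to $Q_n$, the passage to the isotropic log-concave image measure, the pointwise density bound from Theorems~\ref{t1.1} and~\ref{t1.2} combined with Krugova's formula to get $\sup_{|u|=1}\|D_u\tilde\sigma\|_{\rm TV}\le C'(k)$, and the resulting factor $\langle(AL^2A^{T})^{-1}e,e\rangle^{1/2}$ that must be integrated against $\pi_1\otimes\cdots\otimes\pi_n$. All of that is correct. But the step you flag as ``the main obstacle'' and leave unproven is precisely the step that carries the theorem, so as written the proposal has a genuine gap: you never establish
$\langle(AL^2A^{T})^{-1}e,e\rangle^{1/2}\le\sum_j\lambda_j\ell_j^{-1}$ with $\sum_j\lambda_j$ depending only on $k$, and you correctly observe that your two candidate pointwise bounds lose a factor of order $\sqrt n$.

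The missing argument is short and is worth recording. Bound $\langle(AL^2A^{T})^{-1}e,e\rangle^{1/2}=|(AL^2A^{T})^{-1/2}e|$ by the operator norm, which is the reciprocal square root of the smallest eigenvalue:
$$
\|(AL^2A^{T})^{-1/2}\|_{op}=\sup_{|h|=1}\bigl(\langle AL^2A^{T}h,h\rangle\bigr)^{-1/2}
=\sup_{|h|=1}\Bigl(\sum_{j=1}^n\langle c_j,h\rangle^2\ell_j^2\Bigr)^{-1/2},
$$
where $c_j$ is the $j$-th column of $A$. For each \emph{fixed} unit vector $h$ the weights $\langle c_j,h\rangle^2$ sum to $|A^{T}h|^2=1$ by the tight-frame identity you already wrote down, so Jensen's inequality for $t\mapsto t^{-1/2}$ applies exactly as in the one-dimensional case and gives $\bigl(\sum_j\langle c_j,h\rangle^2\ell_j^2\bigr)^{-1/2}\le\sum_j\langle c_j,h\rangle^2\ell_j^{-1}=\langle AL^{-1}A^{T}h,h\rangle$. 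Only \emph{then} take the supremum over $h$ and dominate it by the trace: $\sup_{|h|=1}\langle AL^{-1}A^{T}h,h\rangle\le{\rm tr}(AL^{-1}A^{T})=\sum_j|c_j|^2\ell_j^{-1}$, where now $\sum_j|c_j|^2={\rm tr}(A^{T}A)={\rm tr}(AA^{T})=k$. This is the ``genuine convex combination with total mass $k$'' you were looking for; the point you missed is to apply Jensen pointwise in $h$ \emph{before} optimizing over $h$, and to pay for the optimization with the trace rather than with the specific vector $e$. Integrating $\sum_j|c_j|^2\ell_j^{-1}=\tfrac12\sum_j|c_j|^2\|\nu_{[\alpha_j,\beta_j]}'\|_{\rm TV}$ against the mixing measures then yields $\tfrac12 k\,C'(k)\max_j\|\mu_j'\|_{\rm TV}$ (up to your normalizing $\sqrt{12}$), completing the proof along the very route you set up.
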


\begin{proof}
Applying Theorem \ref{t1.3},
for any function $\varphi\in C_0^\infty(\mathbb{R}^k)$
with $\|\varphi\|_\infty\le1$
and any unit vector $e$,
one has
$$
\int \partial_e\varphi(Ax)\, \mu(dx)
=
\int\tfrac{1}{(\beta_1-\alpha_1)\ldots(\beta_n-\alpha_n)}\Bigl[\int\limits_{\prod_{j=1}^n[\alpha_j,\beta_j]}
\partial_e\varphi(Ax)\, dx\Bigr]\, \pi_1(d\alpha_1d\beta_1)\ldots\pi_n(d\alpha_nd\beta_n).
$$
The inner integral is equal to
$$
\int_{Q_n} \partial_e\varphi\bigl(ALx + Ac\bigr)\, dx,
$$
where $Q_n=[-1/2, 1/2]^n$ is the unit cube in $\mathbb{R}^n$,
$L={\rm diag}(\ell_1,\ldots,\ell_n)$, $\ell_j=\beta_j-\alpha_j$,
and  $c=((\alpha_1+\beta_1)/2,\ldots,(\alpha_n+\beta_n)/2)$.
Let $\sigma$ be the distribution of the random vector $(AL^2A^T)^{-1/2}ALx$
on the probability space $(\mathbb{R}^n, \mathcal{B}(\mathbb{R}^n), I_{Q_n}dx)$.
Note that $\sigma$ is log-concave and isotropic. Indeed, letting $B=AL$, for any $\theta\in\mathbb{R}^k$
we have
\begin{multline*}
\int \langle y,\theta\rangle^2\, \sigma(dy)=
\int_{Q_n} \langle(BB^T)^{-1/2}Bx, \theta\rangle^2\, dx
=
\int_{Q_n} \langle x, B^T(BB^T)^{-1/2}\theta\rangle^2\, dx
\\
=
\langle B^T(BB^T)^{-1/2}\theta,B^T(BB^T)^{-1/2}\theta\rangle
=
\langle (BB^T)^{-1/2}BB^T(BB^T)^{-1/2}\theta,\theta\rangle
=|\theta|^2.
\end{multline*}
Let $\rho$ be the density of the measure $\sigma$.
By Theorems \ref{t1.1} and \ref{t1.2} there is
$C_k$, depending only on $k$, and there are two absolute constants
$C$ and $c$ such that
$$
\rho(x)\le C_k^k e^{Ck - c|x|}.
$$
Thus, for every unit vector $h$
$$
\|D_h\sigma\|_{\rm TV}=2\int_{\langle h\rangle^\bot} \max\limits_t\rho(x+th)\, dx
\le
2C_k^ke^{Ck}\int_{\mathbb{R}^{k-1}} e^{-c\sqrt{x_1^2+\ldots+x_{k-1}^2}}\, dx_1\ldots dx_{k-1}=C_1(k).
$$
We now consider the function
$$
\psi(x):=\varphi\bigl((AL^2A^T)^{1/2}x + Ac\bigr)
$$
and
the vector
$$
\theta:= |(AL^2A^T)^{-1/2}e|^{-1}(AL^2A^T)^{-1/2}e.
$$
Note that
\begin{multline*}
\int\partial_\theta\psi\, d\sigma=
|(AL^2A^T)^{-1/2}e|^{-1}
\int (\partial_e\varphi)\bigl((AL^2A^T)^{1/2}y + Ac\bigr)\, \sigma(dy)
\\
=
|(AL^2A^T)^{-1/2}e|^{-1}
\int_{Q_n} \partial_e\varphi\bigl(ALx + Ac\bigr)\, dx
\end{multline*}
which implies the bound
$$
\int_{Q_n} \partial_e\varphi\bigl(ALx + Ac\bigr)\, dx
\le
C_1(k)|(AL^2A^T)^{-1/2}e|
\le
C_1(k)\|(AL^2A^T)^{-1/2}\|_{op},
$$
where for any matrix $B$ its operator norm is $\|B\|_{op}:=\sup\limits_{|h|=1}|Bh|$.
Let us now recall that
$$
\|(AL^2A^T)^{-1/2}\|_{op} =
(\inf\limits_{|h|=1} \langle (AL^2A^T)^{1/2}h,(AL^2A^T)^{1/2}h\rangle)^{-1/2}
=
\sup\limits_{|h|=1} (\langle AL^2A^Th,h\rangle)^{-1/2}.
$$
We note that the function $R_h(\ell_1^2,\ldots,\ell_n^2)=\langle AL^2A^Th,h\rangle$
is linear for any fixed unit vector $h$. Thus,
$$
R_h(\ell_1^2,\ldots,\ell_n^2) = k_1(h)\ell_1^2+\ldots+k_n(h)\ell_n^2.
$$
Moreover,
$$
k_1(h)+\ldots+k_n(h)=R_h(1,\ldots,1)=\langle A^Th, A^Th\rangle=1
$$
and
$$
k_j(h)=R_h(0,\ldots,1,\ldots,0)\ge0.
$$
By the convexity of the function $t\mapsto t^{-1/2}$ for $t>0$ we get
\begin{multline*}
\|(AL^2A^T)^{-1/2}\|_{op}
=
\sup\limits_{|h|=1} (k_1(h)\ell_1^2+\ldots+k_n(h)\ell_n^2)^{-1/2}
\\
\le
\sup\limits_{|h|=1} [k_1(h)\ell_1^{-1}+\ldots+k_n(h)\ell_n^{-1}]
=
\sup\limits_{|h|=1} R_h(\ell_1^{-1},\ldots,\ell_n^{-1})
=
\sup\limits_{|h|=1}
\langle AL^{-1}A^Th,h\rangle
\\
\le
{\rm tr}[AL^{-1}A^T]
=
{\rm tr}[A^TAL^{-1}]
=\sum_{j=1}^n\Bigl(\sum_{i=1}^k a_{j,i}^2\Bigr)\ell_j^{-1}.
\end{multline*}
Recall that $\|\nu_{[\alpha_j,\beta_j]}'\|_{\rm TV} = 2\ell_j^{-1}$. Hence
\begin{multline*}
\int \partial_e\varphi(Ax)\, \mu(dx)
\le
2^{-1}C_1(k) \int\sum_{j=1}^n\Bigl(\sum_{i=1}^k (a_{j,i})^2\Bigr)
\|\nu_{[\alpha_j,\beta_j]}'\|_{\rm TV}\, \pi_1(d\alpha_1d\beta_1)\ldots\pi_n(d\alpha_nd\beta_n)
\\
=
C_1(k)2^{-1}\sum_{j=1}^n\Bigl(\sum_{i=1}^k (a_{j,i})^2\Bigr)\|\mu_j'\|_{\rm TV}
\le
2^{-1}k C_1(k)\max\limits_{1\le j\le n}\|\mu_j'\|_{\rm TV},
\end{multline*}
which completes the proof.
\end{proof}

\begin{remark}
{\rm
We note that the  constant $C(k)$ obtained  in the previous theorem equals
$$
k C_k^ke^{Ck}\int_{\mathbb{R}^{k-1}} e^{-c\sqrt{x_1^2+\ldots+x_{k-1}^2}}\, dx_1\ldots dx_{k-1},
$$
where $C_k\sim k^{1/4}$. The integral above is equal to
$$
\frac{(k-1)\pi^{\frac{k-1}{2}}\Gamma(k-1)}{c^{k-1}\Gamma(\frac{k-1}{2}+1)}.
$$
Thus, $C(k)\sim (Ck)^{\frac{3k}{4}}$. The hyperplane conjecture asserts that
$C_k$ is actually independent of~$k$ and then $C(k)$ must be equivalent to $(Ck)^{k/2}$.
It is interesting to understand the actual dependence of $C(k)$ of $k$.
}
\end{remark}

\section{Polynomial images of general Skorohod differentiable measures}

In this section we study the regularity properties of polynomial images
of Skorohod differentiable measures on $\mathbb{R}^n$.

We start with the following technical lemma.

\begin{lemma}
Let $\mu$ be a probability measure on $\mathbb{R}^n$ Skorohod differentiable
along a unit vector $\theta\in\mathbb{R}^n$ and
let $f$ be a polynomial of degree $d$ on $\mathbb{R}^n$.
Consider the function
$$
F := \frac{f}{f^2+a^2}.
$$
Let $g\in C^\infty(\mathbb{R}^n)\cap L^\infty(\mathbb{R}^n)$ be a function such that $\partial_\theta g F\in L^1(\mu)$.
Then
$$
\int \partial_\theta g F\, d\mu\le (3(d+1)\pi + 1/2)\|D_\theta\mu\|_{\rm TV} a^{-1}\|g\|_\infty.
$$
\end{lemma}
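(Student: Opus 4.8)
The plan is to reduce the $n$-dimensional estimate to a one-dimensional one by disintegrating $\mu$ along the direction $\theta$. Writing $H=\langle\theta\rangle^\bot$ and $x=y+t\theta$ with $y\in H$, $t\in\mathbb{R}$, I would invoke the structure theorem for measures Skorohod differentiable along a fixed vector (see \cite{DiffMeas}) to represent $\mu=\int_H\mu_y\,\sigma(dy)$, where $\sigma$ is the projection of $\mu$ onto $H$, each conditional measure $\mu_y$ lives on the line $y+\mathbb{R}\theta$ and has a density $\rho_y$ of bounded variation, and $\|D_\theta\mu\|_{\rm TV}=\int_H\|\rho_y'\|_{\rm TV}\,\sigma(dy)$. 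Since $\rho_y\in L^1(\mathbb{R})$ is of bounded variation it vanishes at $\pm\infty$, whence $\|\rho_y\|_\infty\le\tfrac12\|\rho_y'\|_{\rm TV}$. Setting $p(t):=f(y+t\theta)$, a polynomial in $t$ of degree at most $d$, $\phi(t):=g(y+t\theta)$ with $\|\phi\|_\infty\le\|g\|_\infty$, and $G(s):=\frac{s}{s^2+a^2}$, so that $F(y+t\theta)=G(p(t))$, the integral becomes $\int_H\bigl[\int_{\mathbb{R}}\phi'(t)\,G(p(t))\,\rho_y(t)\,dt\bigr]\,\sigma(dy)$.

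On each line I would integrate by parts. The product $G(p)\rho_y$ is of bounded variation (as $G(p)$ is smooth and bounded and $\rho_y$ is $BV$) and tends to $0$ at infinity, so the boundary term drops and
$$
\int_{\mathbb{R}}\phi'\,G(p)\,\rho_y\,dt=-\int_{\mathbb{R}}\phi\,G'(p)\,p'\,\rho_y\,dt-\int_{\mathbb{R}}\phi\,G(p)\,d\rho_y'.
$$
The last term is immediately bounded by $\|\phi\|_\infty\|G\|_\infty\|\rho_y'\|_{\rm TV}=\tfrac{1}{2a}\|\phi\|_\infty\|\rho_y'\|_{\rm TV}$, using $\max_s|G(s)|=\tfrac{1}{2a}$; this produces the $\tfrac12$ in the constant. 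For the first term the key estimate is $\int_{\mathbb{R}}|G'(p(t))|\,|p'(t)|\,dt\le\tfrac{2d}{a}$: since $p'$ has at most $d-1$ real zeros, $\mathbb{R}$ splits into at most $d$ intervals on which $p$ is monotone, and on each of them the substitution $s=p(t)$ bounds the contribution by $\int_{\mathbb{R}}|G'(s)|\,ds=\tfrac{2}{a}$ (the total variation of $G$). Combining this with $\|\rho_y\|_\infty\le\tfrac12\|\rho_y'\|_{\rm TV}$ bounds the first term by $\tfrac{d}{a}\|\phi\|_\infty\|\rho_y'\|_{\rm TV}$.

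Adding the two contributions and integrating over $y\in H$ then yields $\int\partial_\theta g\,F\,d\mu\le(d+\tfrac12)a^{-1}\|g\|_\infty\|D_\theta\mu\|_{\rm TV}$, which is in fact sharper than the stated bound and hence implies it; the cruder factor $3(d+1)\pi$ would arise from expanding $G'(s)=\frac{2a^2}{(s^2+a^2)^2}-\frac{1}{s^2+a^2}$ and bounding each piece separately by $\int\frac{ds}{s^2+a^2}=\frac{\pi}{a}$ per monotonicity interval. I expect the main obstacle to be the rigorous justification of the disintegration together with the distributional integration by parts: one must verify that a measure differentiable only along $\theta$ admits conditional densities $\rho_y$ of bounded variation with $\int_H\|\rho_y'\|_{\rm TV}\,\sigma(dy)=\|D_\theta\mu\|_{\rm TV}$, that these densities vanish at infinity, and that Fubini's theorem and the boundary-free integration by parts are legitimate — this last point being exactly where the hypothesis $\partial_\theta g\,F\in L^1(\mu)$ is used.
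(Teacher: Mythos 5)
Your argument is correct, but it is organized quite differently from the paper's proof and in fact yields a sharper constant. The paper works globally on $\mathbb{R}^n$: for compactly supported $\psi$ it applies the definition of the Skorohod derivative to the test function $\psi F$, obtaining $\int \partial_\theta\psi\, F\, d\mu = -\int\psi\,\partial_\theta F\, d\mu - \int\psi F\, dD_\theta\mu$, bounds $|\partial_\theta F|\le 3|\partial_\theta f|/(f^2+1)$ and $|F|\le 1/2$, and then controls $\int |\partial_\theta f|/(f^2+1)\, d\mu$ by writing it (after the same disintegration along $\theta$ that you use) as $\int\int |\tfrac{d}{dt}\mathrm{arctg}(f(y+t\theta))|\rho(y+t\theta)\, dt\, dy \le (d+1)\pi\int\sup_\tau\rho(y+\tau\theta)\, dy$; finally a cutoff $\psi_k g$ extends the bound from $C_0^\infty$ to bounded $g$. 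You instead disintegrate first, integrate by parts on each line against the \emph{non-compactly-supported} $\phi$ directly (which absorbs the paper's separate cutoff step into the one-dimensional boundary analysis), and replace the $\mathrm{arctg}$ device by the total variation of $G\circ p$ over the at most $d$ monotonicity intervals of $p$, together with $\|\rho_y\|_\infty\le\tfrac12\|\rho_y'\|_{\rm TV}$. This keeps the two terms $G'(p)p'$ and $G(p)\,d\rho_y'$ together rather than bounding them by $3|\partial_\theta f|/(f^2+a^2)$, which is where your improvement to $(d+\tfrac12)a^{-1}$ comes from; your constant is genuinely better than $(3(d+1)\pi+\tfrac12)a^{-1}$ and suffices for the lemma as stated. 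The technical debts you flag are real but standard and are the same ones the paper incurs implicitly: the characterization of Skorohod differentiability along $\theta$ via conditional densities of bounded variation with $\|D_\theta\mu\|_{\rm TV}=\int_H\|\rho_y'\|_{\rm TV}\,\sigma(d y)$ is in \cite{DiffMeas}, and the boundary-free integration by parts goes through because $|G(p)\rho_y|\le (2a)^{-1}\rho_y\in L^1$, so the cutoff error $\int\chi_R'\phi\, G(p)\rho_y\, dt$ tends to zero. One small point worth making explicit in a write-up: the product rule $D(G(p)\rho_y)=G'(p)p'\rho_y\, dt + G(p)\, D\rho_y$ uses that $G\circ p$ is $C^1$ with bounded derivative, which holds since $G'(p)p'=O(|t|^{-d-1})$ at infinity.
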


\begin{proof}
Without loss of generality we can assume that $a=1$.
For every function $\psi\in C_0^\infty(\mathbb{R}^n)$
we have
\begin{equation*}
\int \partial_\theta\psi F\, d\mu =
- \int \psi\left[\frac{\partial_\theta f}{f^2+1}-\frac{2f^2\partial_\theta f}{(f^2+1)^2}\right]\, d\mu-
\int \psi(x)F(x)\, D_\theta\mu(dx).
\end{equation*}
The right-hand side is estimated from above by
$$
\|\psi\|_\infty\left(3\int\frac{|\partial_\theta f|}{f^2+1}d\mu_n + 2^{-1}\|D_\theta\mu\|_{\rm TV}\right).
$$
Let us now note that
\begin{multline*}
\int\frac{|\partial_\theta f|}{f^2+1}\, d\mu=
\int_{\langle\theta\rangle^\bot}\int_{\mathbb{R}}
\left|\frac{d}{dt}\mathrm{arctg}(f(y+t\theta))\right|\rho(y+t\theta)\, dtdy\\
\le (d+1)\pi\int_{\langle\theta\rangle^\bot}\sup_\tau\rho(y+\tau\theta)\, dy
\le(d+1)\pi \|D_\theta\mu\|_{\rm TV}.
\end{multline*}
Thus,
\begin{equation}\label{est1}
\int \partial_\theta\psi F\, d\mu\le(3(d+1)\pi + 1/2)\|D_\theta\mu\|_{\rm TV}\|\psi\|_\infty.
\end{equation}
Let $\psi_k(x) = \psi (x/k)$, where $\psi\in C_0^\infty(\mathbb{R}^n)$ is a function such that
$\psi(x) = 1$ for $|x|\le1$, $\psi(x)=0$ for $|x|>2$, and $\psi(x)\in [0, 1]$ for each $x$.
We now apply estimate (\ref{est1}) to the function $\psi_kg$:
$$
\int \partial_\theta(\psi_kg) F\, d\mu\le (3(d+1)\pi + 1/2)\|D_\theta\mu\|_{\rm TV}\|g\|_\infty.
$$
Note that
$$
\int \partial_\theta(\psi_kg) F\, d\mu = \int g\partial_\theta\psi_k F\, d\mu + \int \psi_k\partial_\theta g F\, d\mu,
$$
where the first term tends to zero and the second term tends to $\int \partial_\theta g F\, d\mu$.
Thus, we have obtained the desired estimate. The lemma is proved.
\end{proof}

\begin{corollary}\label{c1.1}
Let $\mu$ be a probability measure on $\mathbb{R}^n$ Skorohod differentiable
along a unit vector $\theta\in\mathbb{R}^n$
and let $f$ be a polynomial of degree $d$ on $\mathbb{R}^n$.
Then, for any function
$\varphi\in C_b^\infty(\mathbb{R})$ with $\|\varphi\|_\infty\le1$, any unit vector $\theta$,
and any positive number $\varepsilon$,
one has
$$
\int \varphi'(f)\partial_\theta f \frac{\partial_\theta f}{(\partial_\theta f)^2 + \varepsilon}d\mu_n
\le (3\pi d + 1/2)\|D_\theta\mu\|_{\rm TV}\varepsilon^{-1/2}.
$$
\end{corollary}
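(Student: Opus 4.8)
The plan is to deduce this directly from the preceding Lemma by choosing the auxiliary function $g$ and the polynomial appropriately, so that the integrand takes exactly the form $\partial_\theta g\cdot F$ treated there. The key observation is the algebraic identity that rewrites the integrand. Concretely, I would set $P:=\partial_\theta f$, which is a polynomial of degree at most $d-1$, take $a:=\sqrt{\varepsilon}$ so that $a^2=\varepsilon$, and define
$$
F:=\frac{P}{P^2+a^2}=\frac{\partial_\theta f}{(\partial_\theta f)^2+\varepsilon}.
$$
Choosing $g:=\varphi\circ f$, one computes $\partial_\theta g=\varphi'(f)\,\partial_\theta f$, and hence
$$
\partial_\theta g\cdot F=\varphi'(f)\,\partial_\theta f\,\frac{\partial_\theta f}{(\partial_\theta f)^2+\varepsilon},
$$
which is precisely the integrand in the statement of the corollary.

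Next I would verify the hypotheses of the Lemma for this choice. Since $\varphi\in C_b^\infty(\mathbb{R})$ and $f$ is a polynomial, $g=\varphi\circ f$ is smooth, and $\|g\|_\infty=\sup_x|\varphi(f(x))|\le\|\varphi\|_\infty\le1$, so $g\in C^\infty(\mathbb{R}^n)\cap L^\infty(\mathbb{R}^n)$. The integrability requirement $\partial_\theta g\,F\in L^1(\mu)$ is automatic here: the factor $\frac{(\partial_\theta f)^2}{(\partial_\theta f)^2+\varepsilon}$ is bounded by $1$ and $\varphi'$ is bounded (as $\varphi\in C_b^\infty$), so $\partial_\theta g\,F$ is a bounded function and therefore integrable against the probability measure $\mu$.

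Applying the Lemma with the polynomial $P=\partial_\theta f$ (of degree at most $d-1$) and with $a=\sqrt{\varepsilon}$ then yields
$$
\int\varphi'(f)\,\partial_\theta f\,\frac{\partial_\theta f}{(\partial_\theta f)^2+\varepsilon}\,d\mu\le\bigl(3((d-1)+1)\pi+1/2\bigr)\|D_\theta\mu\|_{\rm TV}\,\varepsilon^{-1/2}\,\|g\|_\infty,
$$
and since $((d-1)+1)=d$ and $\|g\|_\infty\le1$, the right-hand side is exactly $(3\pi d+1/2)\|D_\theta\mu\|_{\rm TV}\,\varepsilon^{-1/2}$, as claimed. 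The only points deserving a brief remark are the degree shift — $\partial_\theta f$ has degree one less than $f$, which is precisely what converts the Lemma's constant $3(d+1)\pi$ into $3\pi d$ here — and the degenerate case $\partial_\theta f\equiv0$, in which the integrand vanishes and the inequality holds trivially. Since the argument is in essence a substitution, I do not anticipate any genuine obstacle; the one thing to track carefully is that the Lemma's constant is nondecreasing in the degree, so invoking it with the upper bound $d-1$ for $\deg\partial_\theta f$ remains legitimate even when the actual degree of $\partial_\theta f$ is strictly smaller.
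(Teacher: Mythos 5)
Your proof is correct and is exactly the intended derivation: the paper states the corollary without proof, and the substitution $P=\partial_\theta f$ (degree $\le d-1$), $a=\sqrt{\varepsilon}$, $g=\varphi\circ f$ in the preceding lemma is precisely how the constant $3(d+1)\pi+1/2$ becomes $3\pi d+1/2$. Your remarks on the degree shift, the monotonicity of the constant in the degree, and the trivial case $\partial_\theta f\equiv 0$ are all apt.
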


We are now ready to prove our second main result.

\begin{theorem}
Let $\mu$ be a probability measure on $\mathbb{R}^n$ Skorohod differentiable
along every vector $h\in\mathbb{R}^n$ and let $f$ be a polynomial of degree $d$ on $\mathbb{R}^n$ of the form
$f(x) = \sum_{j=0}^d B_j(x, \ldots, x)$, where each
$B_j\colon (\mathbb{R}^n)^j\to\mathbb{R}$ is a $j$-linear symmetric function.
Then, for an arbitrary function $\varphi\in C_b^\infty(\mathbb{R})$ with $\|\varphi\|_\infty\le1$,
one has
\begin{equation}\label{est2}
\int \varphi'(f)\, d\mu\le 12\pi \sup_{|\theta|=1}
\|D_\theta\mu\|_{\rm TV}\|B_d\|^{-1/d}\|\varphi'\|_\infty^{1-1/d}
\end{equation}
and the measure $\mu\circ f^{-1}$
possess a density from the Nikolskii--Besov class $B^{1/d}_{1,\infty}(\mathbb{R})$.
Moreover,
$$
\mu(f\in A)\le 12\pi\sup_{|\theta|=1}\|D_\theta\mu\|_{\rm TV}\|B_d\|^{-1/d} [\lambda(A)]^{1/d}.
$$
\end{theorem}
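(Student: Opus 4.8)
The plan is to reduce everything to the central estimate (\ref{est2}) and then quote the two embedding lemmas. Indeed, once (\ref{est2}) is established for every $\varphi\in C_0^\infty(\mathbb{R})$ with $\|\varphi\|_\infty\le1$ (such $\varphi$ lie in $C_b^\infty(\mathbb{R})$), Lemma \ref{lem1.1} applied with $\alpha=1/d$ and $C=12\pi\sup_{|\theta|=1}\|D_\theta\mu\|_{\rm TV}\|B_d\|^{-1/d}$ to the measure $\nu=\mu\circ f^{-1}$, for which $\int\varphi'\,d\nu=\int\varphi'(f)\,d\mu$, gives the Nikolskii--Besov membership, while Lemma \ref{lem1.2} gives the sublevel bound $\mu(f\in A)\le C\lambda(A)^{1/d}$. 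Throughout I write $M:=\sup_{|\theta|=1}\|D_\theta\mu\|_{\rm TV}$ and let $\rho\in BV$ be the density of $\mu$.

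To prove (\ref{est2}) I first fix a unit vector $\theta$ maximizing $\eta\mapsto|B_d(\eta,\dots,\eta)|$ over the sphere; the maximum is attained by compactness and is positive because $B_d\not\equiv0$, and the polarization inequality gives $|B_d(\theta,\dots,\theta)|\ge\frac{d!}{d^d}\|B_d\|$. Writing $u=\partial_\theta f$ and using $1=\frac{u^2}{u^2+\varepsilon}+\frac{\varepsilon}{u^2+\varepsilon}$, I split $\int\varphi'(f)\,d\mu$ into two integrals. The first, $\int\varphi'(f)\frac{u^2}{u^2+\varepsilon}\,d\mu$, is exactly the quantity controlled by Corollary \ref{c1.1} and is therefore at most $(3\pi d+\tfrac12)M\varepsilon^{-1/2}$.

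The heart of the argument is the second (error) integral, bounded by $\|\varphi'\|_\infty\int\frac{\varepsilon}{u^2+\varepsilon}\,d\mu$, which I estimate by disintegrating $\mu$ along the lines $t\mapsto y+t\theta$, $y\in\langle\theta\rangle^\bot$. On each line $g_y(t):=f(y+t\theta)$ is a polynomial of degree $d$ with leading coefficient $B_d(\theta,\dots,\theta)$, so $u(y+t\theta)=g_y'(t)$ has degree $d-1$ and leading coefficient $L=dB_d(\theta,\dots,\theta)$. For a fixed line I bound $\int\frac{\varepsilon}{g_y'(t)^2+\varepsilon}\rho(y+t\theta)\,dt\le(\sup_t\rho(y+t\theta))\int\frac{\varepsilon}{g_y'(t)^2+\varepsilon}\,dt$, apply the classical one-dimensional sublevel estimate $\lambda\{t:|p(t)|\le\delta\}\le 4(\delta/|\mathrm{lead}(p)|)^{1/\deg p}$ (one of the lemmas from \cite{Kos}, \cite{BKZ}) through the layer-cake formula to get $\int\frac{\varepsilon}{g_y'(t)^2+\varepsilon}\,dt\le4\beta_d|L|^{-1/(d-1)}\varepsilon^{1/(2(d-1))}$ with $\beta_d=\int_0^1((1-u)/u)^{1/(2(d-1))}\,du\le\pi/2$, and finally integrate in $y$ using $\sup_t\rho(y+t\theta)\le\frac12\int|\partial_t\rho(y+t\theta)|\,dt$ together with $\int_{\langle\theta\rangle^\bot}\int|\partial_t\rho|\,dt\,dy=\|D_\theta\mu\|_{\rm TV}\le M$. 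This yields $\int\frac{\varepsilon}{u^2+\varepsilon}\,d\mu\le2M\beta_d|L|^{-1/(d-1)}\varepsilon^{1/(2(d-1))}$.

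Combining the two bounds gives $\int\varphi'(f)\,d\mu\le A\varepsilon^{-1/2}+B\varepsilon^{1/(2(d-1))}$ with $A=(3\pi d+\tfrac12)M$ and $B=2\|\varphi'\|_\infty M\beta_d|L|^{-1/(d-1)}$; minimizing the right-hand side over $\varepsilon>0$ produces $A^{1/d}B^{1-1/d}$ times a bounded factor, and inserting $A$, $B$ and $|L|\ge d\frac{d!}{d^d}\|B_d\|$ collects $M$ to the first power, $\|\varphi'\|_\infty$ to the power $1-1/d$, and $\|B_d\|$ to the power $-1/d$, which is the form required in (\ref{est2}). The only real difficulty is quantitative: I must check that the resulting $d$-dependent constant --- the product of $(3\pi d+\tfrac12)^{1/d}$, $(2\beta_d)^{1-1/d}$, the optimization factor, and the polarization factor $(d^d/d!)^{1/d}\to e$ --- never exceeds $12\pi$ (it peaks near $d=2$, around $16$, and decreases toward $2e$). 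The degenerate case $d=1$ is treated separately and directly: with $\theta=B_1/|B_1|$ one has $\varphi'(f)=|B_1|^{-1}\partial_\theta(\varphi\circ f)$, and integration against $D_\theta\mu$ gives $\int\varphi'(f)\,d\mu\le M|B_1|^{-1}$.
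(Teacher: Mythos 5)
Your proof is correct, but it takes a genuinely different route from the paper's. The paper argues by induction on the degree: after the same algebraic splitting $1=\frac{u^2}{u^2+\varepsilon}+\frac{\varepsilon}{u^2+\varepsilon}$ and the same appeal to Corollary \ref{c1.1} for the first term, it controls $\mu(|\partial_\theta f|\le\sqrt{s})$ in the error term not by a deterministic sublevel inequality on lines, but by applying the induction hypothesis to the lower-degree polynomial $\partial_\theta f$ (whose top form is $dB_d(\cdot,\ldots,\cdot,\theta)$) combined with Lemma \ref{lem1.2}; the direction $\theta$ is then optimized only at the very end, via $\sup_{|\theta|=1}\|B_d(\cdot,\ldots,\cdot,\theta)\|=\|B_d\|$, so no polarization constant is lost. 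You instead fix $\theta$ at the outset by maximizing $|B_d(\theta,\ldots,\theta)|$ (paying the polarization factor $(d^d/d!)^{1/d}\le e$), disintegrate $\mu$ along lines in direction $\theta$, and use the classical Chebyshev--Remez bound $\lambda\{|p|\le\delta\}\le 4(\delta/|\mathrm{lead}(p)|)^{1/\deg p}$ together with $\sup_t\rho(y+t\theta)\le\tfrac12\operatorname{Var}_t\rho(y+t\theta)$ and $\int_{\langle\theta\rangle^\bot}\operatorname{Var}_t\rho(y+t\theta)\,dy=\|D_\theta\mu\|_{\rm TV}$. Your route is non-inductive and more self-contained (the only polynomial input is the one-dimensional sublevel estimate), at the cost of slightly messier constants; the paper's induction keeps the bookkeeping cleaner and yields the statement for all lower degrees as a byproduct, which it needs anyway. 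Your numerical check is also right: the constant you produce peaks near $d=2$ at roughly $16<12\pi$ and decreases toward $2e$, and the reduction of the Besov membership and of the bound on $\mu(f\in A)$ to Lemmas \ref{lem1.1} and \ref{lem1.2} is exactly what the paper intends.
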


\begin{proof}
We will prove this theorem by induction on $d$.

The base of induction.
We first consider the case $d=1$. In this case $f(x) = \langle a, x \rangle + b = \sum_{k=1}^na_k x_k + b$
for some $a\in \mathbb{R}^n$ and $b\in\mathbb{R}$.
Note that for an arbitrary function $\varphi\in C_0^\infty(\mathbb{R})$ with $\|\varphi\|_\infty\le1$ one has
$$
\int\varphi'(f)\, d\mu=
-|a|^{-2}\int\varphi\Bigl(\sum_{k=1}^na_k x_k+b\Bigr)\, D_a\mu(dx)
\le|a|^{-2}\|D_a\mu\|_{\rm TV}\le |a|^{-1}\sup_{|\theta|=1}\|D_\theta\mu\|_{\rm TV}.
$$

The induction step.
Assume that estimate (\ref{est2}) is valid for every polynomial of degree not greater than $d$ and
let $f$ be a polynomial of degree $d+1$.
Note that for an arbitrary function $\varphi\in C_0^\infty(\mathbb{R})$ with $\|\varphi\|_\infty\le1$ we have
\begin{equation}\label{exp2}
\int\varphi'(f)\, d\mu =
\int\varphi'(f)\partial_\theta f
\frac{\partial_\theta f}{(\partial_\theta f)^2+\varepsilon}\, d\mu+
\varepsilon\int\varphi'(f(x))
\left((\partial_\theta f)^2+\varepsilon\right)^{-1}\, d\mu
\end{equation}
for an arbitrary number $\varepsilon>0$ and for an arbitrary unit vector $\theta\in \mathbb{R}^n$.
Let us estimate each term separately.
For the first term by Corollary \ref{c1.1} we have
$$
\int\varphi'(f)\partial_\theta f
\frac{\partial_\theta f}{(\partial_\theta f)^2+\varepsilon}d\mu_n \le
(3\pi d + 1/2)\sup_{|h|=1}\|D_h\mu\|_{\rm TV}\varepsilon^{-1/2}.
$$
By the equality $\partial_\theta f(x) = \sum_{j=1}^{d+1} jB_j(x,\ldots, x, \theta)$,
the induction hypothesis, and Lemma \ref{lem1.2},
the second term in (\ref{exp2}) can be estimated as follows:
\begin{multline*}
\varepsilon\int\varphi'(f)
((\partial_\theta f)^2+\varepsilon)^{-1}\, d\mu
\le\varepsilon\|\varphi'\|_\infty\int_0^{1/\varepsilon}\mu((\partial_\theta f)^2\le 1/t-\varepsilon)\, dt\\
=\varepsilon\|\varphi'\|_\infty\int_0^\infty(s+\varepsilon)^{-2}\mu(|\partial_\theta f|\le\sqrt{s})\, ds\\
\le\varepsilon\|\varphi'\|_\infty 12\pi\sup_{|h|=1}\|D_h\mu\|_{\rm TV}
(d+1)^{-1/d}\|B_{d+1}(\cdot,\ldots,\cdot, \theta)\|^{-1/d}2^{1/d}\int_0^\infty(s+\varepsilon)^{-2}s^{1/2d}\, ds\\
=\varepsilon^{1/2d}\|\varphi'\|_\infty \sup_{|h|=1}\|D_h\mu\|_{\rm TV}
\|B_{d+1}(\cdot,\ldots,\cdot, \theta)\|^{-1/d}2^{1/d}(d+1)^{-1/d}12\pi\int_0^\infty(s+1)^{-2}s^{1/2d}\, ds.
\end{multline*}
Thus, we have
\begin{multline*}
\int\varphi'(f)\, d\mu
\le\varepsilon^{-1/2}(3\pi d+1/2)\sup_{|h|=1}\|D_h\mu\|_{\rm TV}
\\
+
\varepsilon^{1/2d}\|\varphi'\|_\infty \sup_{|h|=1}\|D_h\mu\|_{\rm TV}
\|B_{d+1}(\cdot,\ldots,\cdot, \theta)\|^{-1/d}
2^{1/d}(d+1)^{-1/d}12\pi\int_0^\infty(s+1)^{-2}s^{1/2d}\, ds.
\end{multline*}
Setting
$$
\varepsilon= (3\pi d+1/2)^\frac{2d}{d+1}\Bigl(\|\varphi'\|_\infty
\|B_{d+1}(\cdot,\ldots,\cdot, \theta)\|^{-\frac{1}{d}}2^{\frac{1}{d}}
(d+1)^{-\frac{1}{d}}12\pi\int_0^\infty(s+1)^{-2}s^{\frac{1}{2d}}\, ds\Bigr)^{-\frac{2d}{d+1}},
$$
we get
$$
\int\varphi'(f)\, d\mu\le c_{d+1}\sup_{|h|=1}\|D_h\mu\|_{\rm TV}
\|B_{d+1}(\cdot,\ldots,\cdot, \theta)\|^{-1/(d+1)}\|\varphi'\|_\infty^{1-1/(d+1)}
$$
with
$$
c_{d+1} = (3\pi d+1/2)^\frac{1}{d+1}\Bigl(2^{1/d}(d+1)^{-1/d}12\pi
\int_0^\infty(s+1)^{-2}s^{1/2d}\, ds\Bigr)^{\frac{d}{d+1}}.
$$
Note that
$$
\int_0^\infty(s+1)^{-2}s^{1/2d}ds\le \Bigl(\int_0^\infty(s+1)^{-2}s^{1/2}ds\Bigr)^{1/d}  
= (\pi/2)^{1/d}
\le 2^{1/d}
$$
and also note that
$$
\frac{3\pi d+1/2}{d+1}\le3\pi.
$$
Thus, $c_{d+1}\le 12\pi$ and
taking the infimum over $\theta$ we get the desired bound.
Theorem is proved.
\end{proof}

From Lemma \ref{lem1.2} we get the following
result on the integrability of the distribution density.

\begin{corollary}
Let $\mu$ be a measure on $\mathbb{R}^n$ Skorohod differentiable
along every vector $h\in\mathbb{R}^n$ and let $f$ be a polynomial of degree $d$ on $\mathbb{R}^n$ of the form
$f(x) = \sum_{j=0}^d B_j(x, \ldots, x)$, where each
$B_j\colon (\mathbb{R}^n)^j\to\mathbb{R}$ is a $j$-linear symmetric function.
Then
the density $\rho_f$ of the measure
$\mu\circ f^{-1}$
is integrable to every power $p\in\bigl[1, \frac{d}{d-1}\bigr)$
and
$$
\|\rho_f\|_p\le c(p,d)\bigl(\sup_{|\theta|=1}
\|D_\theta\mu\|_{\rm TV} \bigr)^{d(1-1/p)}\|B_d\|^{1/p-1},
$$
where
$c(p,d) = \Bigl(p + p\bigl(\frac{d}{d-1}-p\bigr)^{-1}\Bigr)^{1/p}(12\pi)^{d(1-1/p)}$.
\end{corollary}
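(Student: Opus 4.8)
The plan is to derive this directly from the preceding theorem together with Lemma~\ref{lem1.2}: the theorem furnishes exactly the hypothesis of the lemma once we pass to the image measure, after which only a substitution of parameters and some bookkeeping of constants remain. I do not expect any genuine obstacle here; the one point deserving a moment's attention is the compatibility of the function classes appearing in the two statements.

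First I would set $\nu:=\mu\circ f^{-1}$, a Borel probability measure on $\mathbb{R}$, and use the defining change-of-variables identity $\int_\mathbb{R}\varphi'\,d\nu=\int_{\mathbb{R}^n}\varphi'(f)\,d\mu$, valid for every $\varphi\in C_0^\infty(\mathbb{R})$. Since $C_0^\infty(\mathbb{R})\subset C_b^\infty(\mathbb{R})$, the estimate (\ref{est2}) of the preceding theorem applies to the right-hand side, so that for every $\varphi\in C_0^\infty(\mathbb{R})$ with $\|\varphi\|_\infty\le1$ one has $\int_\mathbb{R}\varphi'\,d\nu\le C\|\varphi'\|_\infty^{1-1/d}$, where $C:=12\pi\sup_{|\theta|=1}\|D_\theta\mu\|_{\rm TV}\|B_d\|^{-1/d}$. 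This is precisely the hypothesis of Lemma~\ref{lem1.2} with $\alpha=1/d$.

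Next I would invoke Lemma~\ref{lem1.2} with this $C$ and $\alpha=1/d$. Its integrability range $1<p<\frac{1}{1-\alpha}$ becomes $1<p<\frac{d}{d-1}$, and adjoining the trivial endpoint $p=1$ (for which $\|\rho_f\|_1=1$, well below the asserted bound since $c(1,d)=d$) yields the full interval $[1,\frac{d}{d-1})$. In the quantitative conclusion $\|\rho_\nu\|_p\le\bigl(p+p(\frac{1}{1-\alpha}-p)^{-1}\bigr)^{1/p}C^{\frac{1}{\alpha}(1-1/p)}$ the substitution $\alpha=1/d$ turns the prefactor into $\bigl(p+p(\frac{d}{d-1}-p)^{-1}\bigr)^{1/p}$ and the exponent $\frac{1}{\alpha}(1-1/p)$ into $d(1-1/p)$. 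It then remains to expand
$$
C^{d(1-1/p)}=(12\pi)^{d(1-1/p)}\Bigl(\sup_{|\theta|=1}\|D_\theta\mu\|_{\rm TV}\Bigr)^{d(1-1/p)}\|B_d\|^{-\frac{1}{d}\cdot d(1-1/p)},
$$
and to simplify $-\frac{1}{d}\cdot d(1-1/p)=1/p-1$, so that the final factor is $\|B_d\|^{1/p-1}$. Absorbing $(12\pi)^{d(1-1/p)}$ into the definition of $c(p,d)$ reproduces exactly the claimed estimate, which completes the plan.
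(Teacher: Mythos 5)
Your proposal is correct and follows exactly the route the paper intends: the paper derives this corollary directly from the preceding theorem's estimate (\ref{est2}) combined with Lemma~\ref{lem1.2} applied to $\nu=\mu\circ f^{-1}$ with $\alpha=1/d$ and $C=12\pi\sup_{|\theta|=1}\|D_\theta\mu\|_{\rm TV}\|B_d\|^{-1/d}$. Your bookkeeping of the constants, the exponent $d(1-1/p)$, the factor $\|B_d\|^{1/p-1}$, and the trivial endpoint $p=1$ is all accurate.
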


From \cite[Lemma 2.3]{Kos} (see also \cite[Theorem 3.2]{BKZ})
we get the following corollary.

\begin{corollary}
Let $\mu$ be a measure on $\mathbb{R}^n$ Skorohod differentiable
along every vector $h\in\mathbb{R}^n$ and let $f_1$ and $f_2$ be two polynomials of degree $d$ on $\mathbb{R}^n$
of the form
$f_i(x) = \sum_{j=0}^d B^i_j(x, \ldots, x)$, where each
$B^i_j\colon (\mathbb{R}^n)^j\to\mathbb{R}$ is a $j$-linear symmetric function, $i=1,2$.
Then
$$
\|\mu\circ f_1^{-1} - \mu\circ f_2^{-1}\|_{\rm TV}
\le C\|\mu\circ f_1^{-1} - \mu\circ f_2^{-1}\|_{\rm KR}^{1/(1+d)},
$$
where
$$
C=2\Bigl(1+12\pi\sup_{|\theta|=1}\|D_\theta\mu\|_{\rm TV}\bigl(\|B^1_d\|^{-1/d} + \|B^2_d\|^{-1/d}\bigr)\Bigr)
$$
and $\|\cdot\|_{\rm KR}$ is the Kantorovich--Rubinstein norm of a measure defined by
$$
\|\nu\|_{\rm KR} := \sup\biggl\{\int \varphi\, d\nu, \ \varphi\in C_0^\infty(\mathbb{R}^n),\ \|\varphi\|_\infty\le1,\ \|\varphi'\|_\infty\le1\biggr\}.
$$
\end{corollary}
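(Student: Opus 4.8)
The plan is to reduce the statement to the interpolation inequality of \cite[Lemma 2.3]{Kos} (equivalently \cite[Theorem 3.2]{BKZ}), which converts a Nikolskii--Besov type bound on the distributional derivative of a signed measure on $\mathbb{R}$ into control of its total variation norm by a power of its Kantorovich--Rubinstein norm. Concretely, I would set $\sigma := \mu\circ f_1^{-1} - \mu\circ f_2^{-1}$, a bounded signed measure on the real line, and first verify that $\sigma$ satisfies the hypothesis of that lemma with exponent $\alpha = 1/d$.

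The key step is to transfer estimate (\ref{est2}) from each $\nu_i := \mu\circ f_i^{-1}$ to their difference $\sigma$. The Theorem above, applied to $f_1$ and to $f_2$, gives for every $\varphi\in C_b^\infty(\mathbb{R})$ with $\|\varphi\|_\infty\le 1$ the one-sided bounds
$$
\int\varphi'\,d\nu_i\le 12\pi\sup_{|\theta|=1}\|D_\theta\mu\|_{\rm TV}\|B^i_d\|^{-1/d}\|\varphi'\|_\infty^{1-1/d},\quad i=1,2.
$$
Replacing $\varphi$ by $-\varphi$ upgrades each of these to a two-sided bound on $\bigl|\int\varphi'\,d\nu_i\bigr|$, and then the triangle inequality yields
$$
\Bigl|\int\varphi'\,d\sigma\Bigr|\le M\|\varphi'\|_\infty^{1-1/d},\quad
M:=12\pi\sup_{|\theta|=1}\|D_\theta\mu\|_{\rm TV}\bigl(\|B^1_d\|^{-1/d}+\|B^2_d\|^{-1/d}\bigr).
$$
This is exactly the hypothesis required to invoke \cite[Lemma 2.3]{Kos} with $\alpha = 1/d$; its conclusion, with the exponent $\alpha/(1+\alpha) = 1/(1+d)$ and the constant $2(1+M)$, is precisely the asserted inequality, since $M$ coincides with the bracketed quantity defining $C$.

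Because the interpolation lemma is quoted, the only genuine work is the transfer step above, which is routine. The substantive content -- and the step I would regard as the main obstacle were I to prove the corollary without the cited lemma -- is the interpolation itself. There the natural argument is to mollify the admissible test function (equivalently, the density of $\sigma$) at scale $\delta$: the smoothing error is controlled by the Besov modulus of $\sigma$, which by Lemma \ref{lem1.1} is of order $M\delta^{1/d}$, while the smoothed part has derivative of size $O(1/\delta)$ and is therefore controlled by $\|\sigma\|_{\rm KR}/\delta$. Balancing the two contributions $M\delta^{1/d}$ and $\|\sigma\|_{\rm KR}/\delta$ over $\delta$ produces the exponent $1/(1+d)$ on $\|\sigma\|_{\rm KR}$, and the stated constant $2(1+M)$ emerges under the particular (non-optimized) normalization in which \cite[Lemma 2.3]{Kos} is phrased. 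I would therefore spend essentially all the effort confirming that the bounds on $\sigma$ match that normalization.
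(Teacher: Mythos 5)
Your proposal is correct and follows exactly the route the paper intends: the paper gives no explicit argument beyond the citation ``From \cite[Lemma 2.3]{Kos} \ldots we get the following corollary,'' and your transfer step --- applying estimate (\ref{est2}) to $\pm\varphi$ for each $f_i$, summing by the triangle inequality to get the constant $M=12\pi\sup_{|\theta|=1}\|D_\theta\mu\|_{\rm TV}\bigl(\|B^1_d\|^{-1/d}+\|B^2_d\|^{-1/d}\bigr)$, and invoking the interpolation lemma with $\alpha=1/d$ so that $\alpha/(1+\alpha)=1/(d+1)$ and the constant becomes $2(1+M)$ --- is precisely the omitted routine verification.
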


\vskip .1in

The author is a Young
Russian Mathematics award winner and would like to thank its sponsors and jury.

The article was prepared within the framework of the
HSE University Basic Research Program
and funded by the Russian Academic Excellence Project '5-100'.

This research was supported by the RFBR
Grant 17-01-00662 and by the Foundation for the Advancement of Theoretical
Physics and Mathematics ``BASIS''.


\begin{thebibliography}{99}

\bibitem{B-Cube1}
Ball, K.: Cube slicing in $\mathbb{R}^n$.
Proceedings of the AMS, 97(3), 465--473 (1986)

\bibitem{B-Cube2}
Ball, K.: Volumes of sections of cubes and related problems.
In: Geometric aspects of functional analysis,
pp. 251--260 (1989)

\bibitem{Ball} Ball, K.:
Logarithmically concave functions and sections of convex
sets in $\mathbb{R}^n$. Studia Math. 88(1), 69--84 (1988)

\bibitem{BIN}
Besov, O.V., Il'in, V.P., Nikolski{\u{\i}}, S.M.:
Integral representations of functions and imbedding theorems.
V.~I,~II.
Winston \& Sons, Washington; Halsted Press, New York -- Toronto
-- London (1978, 1979)

\bibitem{BChG}
Bobkov, S.G., Chistyakov, G.P., G$\ddot{o}$tze, F.:
Fisher information and the central limit theorem.
Probab. Theory Related Fields. 159(1-2), 1--59 (2014)

\bibitem{BCh}
Bobkov, S.G. , Chistyakov, G.P.:
Bounds on the maximum of the density for sums of independent random variables,
J. Math. Sci. 199(2), 100--106 (2014)


\bibitem{DiffMeas}
Bogachev, V.I.: Differentiable measures and the Malliavin calculus.
Amer. Math. Soc., Providence, Rhode Island (2010)

\bibitem{BKZ} Bogachev, V.I., Kosov, E.D., Zelenov, G.I.:
Fractional smoothness of distributions of
polynomials and a fractional analog of the
Hardy–Landau–Littlewood inequality. Amer. Math. Soc.
370(6), 4401--4432 (2018)

\bibitem{BKP} Bogachev, V.I., Kosov, E.D., Popova, S.N.: A new approach to Nikolskii--Besov classes.
to appear in Mosc. Math. J.

\bibitem{Bor}
Borell, C.: Convex measures on locally convex spaces.
Ark. Math. 12, 239--252 (1974)

\bibitem{Klartag} Klartag, B.:
On convex perturbations with a bounded isotropic constant.
Geom. Funct. Anal. 16(6), 1274--1290 (2006)

\bibitem{Klartag07}
Klartag, B.:
Power-law estimates for the central limit theorem for convex sets. 
J. Funct. Anal. 245(1), 284--310 (2007)

\bibitem{Kos}
Kosov, E.D.: Fractional smoothness
of images of logarithmically concave measures under polynomials. J. Math. Anal. Appl.
462(1), 390--406 (2018)

\bibitem{KosBes}
Kosov, E.D.:
Besov classes on finite and infinite dimensional spaces,
to appear in Sbornik Math.

\bibitem{Krug} Krugova, E. P.:
On translates of convex measures. Sbornik Math. 188(2), 227--236 (1997)

\bibitem{LPP}
Livshyts, G., Paouris, G., Pivovarov, P.: On sharp bounds for marginal densities of product measures.
Israel J. Math. 216(2), 877--889 (2016)

\bibitem{Nikol77}
Nikolskii, S.M.: Approximation of functions of several variables and imbedding theorems.
Transl. from the Russian.  Springer-Verlag, New York -- Heidelberg (1975) (Russian ed.: Moscow, 1977).

\bibitem{RV}
Rudelson, M., Vershynin, R.: Small ball probabilities for linear images of high-dimensional distributions.
Int. Math. Res. Not. 2015(19), 9594--9617 (2014)

\bibitem{Rog}
Rogozin, B.A.:
The estimate of the maximum of the convolution of bounded densities.
Teor. Veroyatn. Primen. 32(1), 53--61 (1987)

\bibitem{Trieb}
Triebel, H.: Theory of function spaces, II, Birkh\"auser Verlag, Basel (1992)

\bibitem{Stein}
Stein, E.:
Singular integrals and differentiability properties of functions.
Princeton University Press, Princeton (1970)



\end{thebibliography}
\end{document}